\renewcommand*\@fnsymbol[1]{\the#1}
\title{On the Algebraic K-theory of Monoids}
\author{Chenghao Chu\footnote{Department of Mathematics, Johns Hopkins University, 3400 N. Charles Street, Baltimore MD 21218, USA. cchu@math.jhu.edu},  Jack Morava\footnote{Department of Mathematics, Johns Hopkins University, 3400 N. Charles Street, Baltimore MD 21218, USA. jack@math.jhu.edu} }
\newtheorem{theorem}{Theorem}[section]
\newtheorem{corollary}[theorem]{Corollary}
\newtheorem{definition}[theorem]{Definition}
\newtheorem{example}[theorem]{Example}
\newtheorem{lemma}[theorem]{Lemma}
\newtheorem{proposition}[theorem]{Proposition}
\newtheorem{remark}[theorem]{Remark}
\@undefined\usepackage[usenames,dvips]{color}
\else\usepackage[usenames,dvipsnames]{color}
\definecolor{chu}{rgb}{0,0.5,0}
\definecolor{error}{rgb}{0.8,0,0}
\definecolor{oliver}{rgb}{0,0,0.8}
\definecolor{rekha}{rgb}{0.7,0.4,0.6}
\definecolor{k}{rgb}{.5,0,.5}
\numberwithin{equation}{section}
\DeclareMathOperator{\Spec}{Spec}
\def\F{{\mathbb F}}
\def\Z{{\mathbb Z}}
\def\cM{{\mathcal M}}
\def\cS{{\mathcal S}}
\def\Fun{{\F_1}}
\def\Mo{{\mathcal M_0}}
\def\int{\textup{int}}
\begin{document}

\maketitle \setcounter{footnote}{1}

\begin{abstract} Let $A$ be a not necessarily commutative monoid with zero such that projective $A$-acts are free. This paper shows that the algebraic K-groups of $A$ can be defined using the +-construction and the Q-construction. It is shown that these two constructions give the same K-groups. As an immediate application, the homotopy invariance of algebraic K-theory of certain affine $\Fun$-schemes is obtained. From the computation of $K_2(A),$ where $A$ is the monoid associated to a finitely generated abelian group, the universal central extension of certain groups are constructed.
\end{abstract}


\section{Introduction}\label{Recollections and Notations}
The theory of algebraic geometry over $\Fun$ has recently been developed by many authors.  Deitmar \cite{D05} constructed $\Fun$-schemes using monoids in the same way as the usual schemes are constructed using commutative rings. \cite{D05} also studied the category of quasi-coherent sheaves over these schemes.  For example, an affine $\Fun$-scheme as defined in \cite{D05} is the spectrum of a commutative monoid  and the category of quasi-coherent sheaves over this affine $\Fun$-scheme is equivalent to the category of acts over the corresponding monoid. Please see \cite{D05} for more details. Connes and Consani \cite{CC09} modified and generalized Deitmar's definition of $\Fun$-schemes. One of the necessary modifications is to require monoids to have the zero element $0$.  It is known that Deitmar's $\Fun$-schemes are special type of Connes and Consani's $\Fun$-Schemes, which are called $\Mo$-schemes. In this paper, by ``$\Fun$-scheme" we always mean the $\Fun$-scheme as defined in \cite{CC09} and by ``monoid'' we mean a monoid with $0$ but it may not necessarily be commutative.

Using the terminology of $\Fun$-schemes, K-theory of projective space over monoids were considered in \cite{H-W01, H02}. The algebraic K-theory of affine $\Mo$-schemes, or equivalently commutative monoids, was considered in \cite{D06} via the +-construction (denoted as $K^+$) and Q-construction of Quillen (denoted as $K^Q$). However, $K^+\neq K^Q$ in general because projective acts over a monoid are not direct summands of free acts in general.
In \cite{COR}, the category of quasi-coherent sheaves of modules over $\Fun$-schemes has been generalized from $\Mo$-schemes to $\Fun$-schemes. This category allows one to defined algebraic K-theory of $\Fun$-schemes via  Q-construction. For example, if $G$ is an abelian group and let $\Fun[G]$ denote the monoid $G\cup \{0\}$ and let $Spec(\Fun[G])$ be the spectrum of $\Fun[G]$.  Then the K-groups of $Spec(\Fun[G])$ are naturally isomorphic to the stable homotopy groups of $BG+$ $$K_i(Spec(\Fun[G]))\cong \pi_i^s(BG_+),$$ which is widely believed to be true.

To obtain more properties and computational results on the algebraic K-theory of $\Fun$-schemes, we continue to study the K-theory of not necessarily commutative monoids in this paper. In Section \ref{InvertibleMatricesOverMonoids}, we check that the +-construction can be applied to noncommutative monoid $A$. This is done by showing that there is a reasonable notion of stable special linear group $E(A)$ of the widely accepted general linear group $GL(A)$. From the viewpoint of $\Fun$-geometry, an $A$-set is  an $A$-act with a base point and the zero element of $A$ acts as the constant map sending every element to the base point. This makes the category of $A$-sets slightly different from the category of $A$-acts. So we feel necessary to study the category of $A$-sets in details in Section \ref{ModulesOverMonoids}. In particular, we check that the notion of admissible exact sequences of $A$-sets where $A$ is commutative (see \cite{COR}) can be generalized to the case that $A$ is not necessarily commutative. This means one can use Q-construction to define algebraic K-theory of monoids. If all projective $A$-sets are free, one can prove the $``Q=+"$ theorem as usual.  As the first application, we obtain in Section \ref{TheComparisonTheorem} the homotopy invariance of algebraic K-theory of the affine $\Fun$-scheme $\Spec(A)$, where $A$ is commutative and projective $A$-sets are free. As another application, in Section \ref{UniversalCentralExtension} we compute $\pi_2^s(BG_+)$ via universal central extensions for $G$ finitely generated and abelian.

\section{Invertible matrices over monoids}\label{InvertibleMatricesOverMonoids}
Algebraic K-theory of commutative monoids via +-construction has been discussed briefly in \cite{D06}. In this section, we provided more details and check that the same construction applies to noncommutative monoids.

 By {\it monoid} we always mean a not necessarily commutative monoid with an absorbing element $0$ and a unit $1$. Recall that an element $0$ in a monoid $A$ is called absorbing if $0a=a0=0$ for all $a\in A$. If $0=1$, then $A$ is the zero monoid $\{0\}$. If $A$ has an absorbing element, then the absorbing element is unique. Morphisms of monoids are maps of sets which are multiplicative and preserve $0$ and $1$. Following \cite{CC09}, we think monoids as  $\Fun$-algebras.

\begin{example} If $R$ is a ring, then $R$ is a monoid under the multiplication.
\end{example}

If $A$ is a monoid, let $\Z A$ denote the usual monoid ring whose underlying additive group is the free abelian group on the set $A$. The base extension of $A$, denoted as $\Z[A]$, is defined as the quotient ring of $\Z A$ by the ideal $\Z\cdot 0$ where $0$ is the absorbing element in $A$.

\begin{example}\label{example:Monoids} If $A$ is a monoid which may or may not have an absorbing element, let $A_*=A\cup \{*\}$ be the monoid by adding a distinguished point $*$ to $A$ and requiring that $*\cdot x=x\cdot *=*$ for any $x\in A_*$. Then $A_*$ has an absorbing element $*$. The monoid ring of $A_*$ is isomorphic to the usual monoid ring of $A$. That is, $ \Z[A_*]\cong \Z A$.

Sometimes, we also denote the monoid $A_*$ by $\Fun[A]$.
\end{example}

 Now we define  matrices over $A$, following \cite{K76}. The idea is that a matrix over $A$ defines an A-map between free $A$-sets, which are explained in more details in Section \ref{TheCategoryOfModulesOverMonoids}.
 Let $m,n$ be positive integers. The set of $m\times n$ matrices, denoted as $M_{m\times n}(A)$, is a subset of all $m\times n$ matrices over $\Z[A]$, denoted as $M_{m\times n}(\Z[A])$. A matrix $M=(m_{ij})\in M_{m\times n}(\Z[A])$ is in $M_{m\times n}(A)$ if and only if all entries are in $A$ and $m_{ij}=0$ if $f(i)\neq j$ for some maps of sets $$f: \{1,2,\cdots, m\}\longrightarrow \{1,2,\cdots, n\}.$$

 \begin{lemma}\label{lemma:MatrixMultiplicationOverMonoids} The restriction of the multiplication of matrices over the ring $\Z[A]$ defines the multiplication
 $$M_{m\times n}(A) \times M_{n\times k}(A) \longrightarrow M_{m\times k}(A).$$
 \end{lemma}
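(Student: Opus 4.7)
The plan is to take $M \in M_{m\times n}(A)$ with support function $f\colon\{1,\dots,m\}\to\{1,\dots,n\}$ and $N \in M_{n\times k}(A)$ with support function $g\colon\{1,\dots,n\}\to\{1,\dots,k\}$, form their product $MN$ inside the ambient ring-matrix algebra $M_{m\times k}(\Z[A])$, and verify that the resulting matrix lies in the subset $M_{m\times k}(A)$, with the composite $g\circ f$ serving as its support function.

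First I would compute the $(i,j)$-entry of the product by the usual formula $(MN)_{ij} = \sum_{l=1}^{n} m_{il}\, n_{lj}$. The defining property of $f$ forces $m_{il} = 0$ for every $l \neq f(i)$, so the sum collapses to the single term $m_{i,f(i)}\, n_{f(i),j}$. Hence each entry of $MN$ is either zero or the product in $A$ of two elements of $A$, and in both cases it lies in $A$ because $A$ is closed under multiplication, using the convention built into $\Z[A] = \Z A / \Z\cdot 0$ that the absorbing element of $A$ coincides with $0 \in \Z[A]$.

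Next I would verify the support condition for the product. For any index $j \neq g(f(i))$, the defining property of $g$ gives $n_{f(i),j} = 0$, so the surviving term in the collapsed sum is itself zero; thus $(MN)_{ij} = 0$ whenever $j \neq (g\circ f)(i)$. Taking $g\circ f\colon\{1,\dots,m\}\to\{1,\dots,k\}$ as the required set map then exhibits $MN$ as an element of $M_{m\times k}(A)$, which is what must be shown.

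There is no substantive obstacle here; the content is purely bookkeeping. The only point to watch is that the collapse of the sum to a single term uses the support condition on $M$, while the vanishing of that lone term off the diagonal of $g\circ f$ uses the support condition on $N$, so the composite function naturally encodes both pieces of data.
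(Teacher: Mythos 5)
Your proof is correct and is simply the explicit version of what the paper calls a ``direct computation'': the row-monomic condition on $M$ collapses each entry of the product to the single term $m_{i,f(i)}n_{f(i),j}$, which lies in $A$ and vanishes off the graph of $g\circ f$. This matches the paper's approach, just with the bookkeeping written out.
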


 \begin{proof} Direct computations shows that the multiplication of two matrices over $A$ is again a matrix over $A$.
 \end{proof}

 If $m=n$, we simply denote $M_{m\times n}(A)$ as $M_n(A)$. Similar to \cite{K76}, elements in $M_n(A)$ are called {\it row monomic matrix}. $M_n(A)$ is a monoid under multiplication and we denote the group of invertible elements by $GL_n(A)$. A matrix in $GL_n(A)$ can be written as $D(a_1, \cdots, a_n)\sigma$ where $\sigma$ is a permutation matrix and $D(a_1,\cdots, a_n)$ is a diagonal matrix whose diagonal entries are invertible elements $a_1, a_2,\cdots, a_n\in A.$ There is a natural inclusion of groups $GL_n(A)\to GL_{n+1}(A)$ which enable us to define the stable general linear group over $A$ as
 $$GL(A)=\bigcup\limits_{i=1}^\infty GL_n(A).$$

Let $E(A)$ be the normal subgroup of commutators $[GL(A), GL(A)]$. Recall that a group $G$ is called {\it perfect} if $G=[G, G]$. It is clear to experts that $E(A)$ is a perfect group. To be self-contained, we now provide a proof of this fact.
\begin{proposition}\label{prop: E(A)IsPerfect} Let $A^\times$ denote the group of invertible elements of a monoid $A$.
\begin{enumerate} \item[(1)] An $n\times n$ matrix $M\in E(A)$ if and only if $M=D(a_1, \cdots, a_n)\sigma$ where $\sigma$ is an even permutation and $a_1a_2\cdots a_n\in [A^\times, A^\times]$.
\item[(2)] $E(A)$ is a perfect group.
\end{enumerate}
\end{proposition}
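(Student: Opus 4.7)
My plan is to introduce a ``determinant'' homomorphism
\[
\det: GL(A) \longrightarrow A^\times/[A^\times,A^\times]\times\{\pm 1\},\quad D(a_1,\ldots,a_n)\sigma\mapsto\bigl(\overline{a_1\cdots a_n},\,\operatorname{sgn}\sigma\bigr),
\]
where the overline denotes the class in the abelianization. A short verification (checking that $\sigma D(\vec b)\sigma^{-1}$ is diagonal with entries merely permuted, and that reordering is trivial in $A^\times/[A^\times,A^\times]$) shows $\det$ is a well-defined group homomorphism, stable under $GL_n\hookrightarrow GL_{n+1}$. Since the target is abelian, $E(A)=[GL(A),GL(A)]\subseteq\ker\det$, giving the ``only if'' half of (1).

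For the reverse inclusion in (1) I would use three explicit commutator identities in $GL(A)$: (i) every 3-cycle $(i\,j\,k)=[(i\,j),(j\,k)]$, so every even permutation lies in $E(A)$; (ii) the ``swap diagonal'' $D(a,a^{-1},1,\ldots,1)=[D(a,1,\ldots,1),(1\,2)]$; (iii) $D([x,y],1,\ldots,1)=[D(x,1,\ldots,1),D(y,1,\ldots,1)]$. Using (ii) plus the fact that single-entry diagonals in disjoint positions commute, one obtains $D(a_1,\ldots,a_n)\equiv D(a_1\cdots a_n,1,\ldots,1)\pmod{E(A)}$ by shifting each entry to position $1$. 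If $a_1\cdots a_n=\prod_k[x_k,y_k]$, then (iii) expresses $D(a_1\cdots a_n,1,\ldots,1)$ as a product of commutators in $E(A)$. Combined with (i), this proves (1).

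For (2) I need to upgrade each of (i)-(iii) so that the factors of each commutator already lie in $E(A)$. For 3-cycles this is standard: in $A_n$ for $n\geq 5$, $(1\,4\,3)=[(1\,2\,3),(3\,4\,5)]$, so 3-cycles are commutators of 3-cycles and even permutations lie in $[E(A),E(A)]$. For commutator diagonals one checks
\[
D([x,y],1,1,1,\ldots)=\bigl[D(x,x^{-1},1,1,\ldots),\,D(y,1,y^{-1},1,\ldots)\bigr],
\]
both factors being swap diagonals and hence in $E(A)$ by (ii). The main obstacle is the swap diagonal itself, since the naive identity $[D(a,1,\ldots,1),(1\,2)]$ uses both a diagonal and a transposition that lie outside $E(A)$. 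The remedy is to stabilize by one more position and trade the transposition for an even 3-cycle:
\[
D(a,a^{-1},1,1,1,\ldots)=\bigl[D(a,1,1,a^{-1},1,\ldots),\,(1\,2\,3)\bigr],
\]
where $D(a,1,1,a^{-1},1,\ldots)=[D(a,1,\ldots,1),(1\,4)]\in E(A)$ and $(1\,2\,3)\in E(A)$ by the 3-cycle identity; direct computation of the right-hand side (conjugation of a diagonal by a 3-cycle cyclically shifts the affected entries) confirms the equality. With these upgraded identities in place, the argument for (1) re-runs verbatim with every occurrence of ``$E(A)$'' promoted to ``$[E(A),E(A)]$'', yielding $E(A)=[E(A),E(A)]$, i.e., $E(A)$ is perfect.
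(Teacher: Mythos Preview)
Your proof is correct and follows essentially the same strategy as the paper: identify the candidate subgroup (your $\ker\det$, the paper's $G$), reduce to even permutations, swap diagonals $D(a,a^{-1},\ldots)$, and commutator diagonals $D([x,y],1,\ldots)$, and then exhibit each as a commutator of elements already in $E(A)$. The differences are cosmetic. You package the ``only if'' direction via an explicit determinant homomorphism, whereas the paper verifies directly that $G$ is a normal subgroup containing $E(A)$; and your key identity for swap diagonals is $D(a,a^{-1},1,\ldots)=[D(a,1,1,a^{-1},\ldots),(1\,2\,3)]$, while the paper instead computes $[D(a,a^{-1}),\sigma]=D(a,1,a^{-1})$ with $\sigma=(1\,2\,3)(4\,5\,6)$ and then invokes normality. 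The paper also streamlines the logic by proving $G\subseteq [E(A),E(A)]$ in one pass (yielding (1) and (2) simultaneously) rather than first establishing (1) and then upgrading, but the content is the same.
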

\begin{proof} Let $G$ be the subset of $GL(A)$ which contains matrices of the form $D(a_1, \cdots, a_n)\sigma$ where $a_1a_2\cdots a_n\in [A^\times, A^\times]$ and $\sigma$ is an even permutation for some $n$. Using the formula
$$D(a_1, \cdots, a_n)\sigma\cdot D(b_1, \cdots, b_n)\alpha=D(a_1, \cdots, a_n)\cdot (\sigma D(b_1, \cdots, b_n)\sigma^{-1})\cdot(\sigma\alpha) $$
and the fact that $$\sigma D(b_1, \cdots, b_n)\sigma^{-1}=D(b_{\sigma(1)}, \cdots, b_{\sigma(n)})$$ one shows that $G$ is a subgroup. Using a similar calculation one also checks that $E(A)$ is a subgroup of $G$ and that $G$ is normal in $GL(A)$. To show that $E(A)=[GL(A), GL(A)]=G$ and to show that $E(A)$ is perfect, we only need to check that $G$ is a subgroup of $[E(A), E(A)]$.

It is well know that all even permutations are in $[E(A), E(A)]$ since $E(A)=[GL(A), GL(A)]$ contains all even permutations. So it is enough to show that any diagonal matrices $D(a_1, \cdots, a_n)$ is in $[E(A), E(A)] $,  where $a_1a_2\cdots a_n\in [A^\times, A^\times]$. If $a\in A^\times$, let $D_{ij}(a)$ be the diagonal matrix whose $(i,i)$-entry is $a$ and $(j,j)$-entry is $a^{-1}$ and $1$ for all other diagonal entries. We have
$$D(a_1, \cdots, a_n)=D(a_1\cdots a_n,1,\cdots, 1)D_{1n}(a_n^{-1})\cdots D_{12}(a_2^{-1}).$$
So we have to show that all matrices on the right hand side of the above equation belong to $[E(A), E(A)]$. Since $[E(A), E(A)]$ is also normal in $GL(A)$ and $D_{ij}(a)=\sigma D_{12}(a)\sigma^{-1}$ for some suitable permutation matrix $\sigma$, we only need to show that the following two matrices are in $[E(A), E(A)]$.
$$D(aba^{-1}b^{-1}, 1)=\left(\begin{array}{cc}aba^{-1}b^{-1}& 0\\0&1\end{array}\right), \quad  D(a, a^{-1})=\left(\begin{array}{cc}a & 0\\0& a^{-1}\end{array}\right).$$

The matrix $D(a, a^{-1})$ is in $E(A)$ because $$D(a, a^{-1})=\left(\begin{array}{cc}a & 0\\0& a^{-1}\end{array}\right)=
\left(\begin{array}{cc}a& 0\\0&1\end{array}\right)
\left(\begin{array}{cc}0& 1\\1& 0\end{array}\right)
\left(\begin{array}{cc}a^{-1}& 0\\0&1\end{array}\right)
\left(\begin{array}{cc}0& 1\\1& 0\end{array}\right).$$
Let $\sigma\in E(A)$ be the permutation matrix which corresponds to the even permutation $(123)(456)$. We have the following direct calculation
$$D(a, a^{-1})\sigma D(a, a^{-1})\sigma^{-1}=D(a, 1, a^{-1})$$
which shows $D(a, 1, a^{-1})\in [E(A), E(A)]$.  Since $[E(A), E(A)]$ is normal and $D(a, 1, a^{-1})$ is conjugate to $D(a, a^{-1})$, we see that $D(a, a^{-1})$ is in $[E(A), E(A)]$.

The following standard calculation shows $D(aba^{-1}b^{-1}, 1)\in [E(A), E(A)]$ and hence completes the proof.
$$D(aba^{-1}b^{-1}, 1)=D(a,a^{-1}, 1)D(b, 1, b^{-1})D(a^{-1}, a, 1)D(b^{-1}, 1, b)$$
\end{proof}

Recall the +-construction of Quillen from \cite{Srinivas}. Let $(X, x)$ be a pointed path connected space that has the homotopy type of a CW complex. Let $G$ be a normal perfect subgroup of $\pi_1(X,x)$. Quillen showed that there exists a unique, up to homotopy,  pointed space $(X^+,x^+)$ together with a map
$f\colon (X,x)\longrightarrow (X^+,x^+)$ satisfying a list of properties, among which it says that the induced map on the fundamental groups is onto with kernel $G$. If $A$ is a monoid, Proposition \ref{prop: E(A)IsPerfect} shows that $E(A)$ is a  normal  perfect subgroup of $GL(A)$. So we can define the algebraic K-theory of a monoid using the $+$-construction. The following definition was given in \cite{D06} in the case that $A$ is commutative.
 \begin{definition}
 Let $A$ be a monoid with an absorbing element. For any integer $i\geq 1$, define the $i$-th algebraic $K$-group of $A$ as
 $$K_i(A)=\pi_{i}(BGL(A)^+),\quad \quad i\geq 1,$$ where $BGL(A)^+$ is the +-construction of $BGL(A)$ with respect to the normal perfect subgroup $E(A)$. If $A$ does not have an absorbing element, let $A_*$ be the monoid which is obtained by adding a distinguished element as the zero element. Define $K_i(A)=K_i(A_*)$ for any $i\geq 1$.
 \end{definition}

The following fact is clear from the definition. One can refer \cite{So} for a detailed proof.
\begin{corollary} $K_1(A)\cong \Z/2\oplus (A^\times)^{ab}$ where $(A^\times)^{ab}=A^\times/[A^\times, A^\times]$ is the abelianization of $A^\times$.
\end{corollary}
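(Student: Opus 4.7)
The plan is to identify $K_1(A)$ with the abelianization $GL(A)^{ab}$ and then to read off this abelianization from the description of $E(A)$ furnished by Proposition \ref{prop: E(A)IsPerfect}.

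By the standard property of the $+$-construction applied to $BGL(A)$ with the perfect normal subgroup $E(A)$, the canonical map $BGL(A) \to BGL(A)^+$ induces an isomorphism $GL(A)/E(A) \cong \pi_1(BGL(A)^+) = K_1(A)$. Moreover, the proof of Proposition \ref{prop: E(A)IsPerfect} establishes both that $E(A) = [GL(A), GL(A)]$ and that $E(A)$ is perfect, so this quotient is precisely $GL(A)^{ab}$. Computing $K_1(A)$ is therefore reduced to computing the abelianization of $GL(A)$.

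To exhibit the required isomorphism, I would define
$$\phi \colon GL(A) \longrightarrow \Z/2 \oplus (A^\times)^{ab}, \qquad D(a_1, \ldots, a_n)\sigma \longmapsto \bigl(\mathrm{sgn}(\sigma),\, [a_1 a_2 \cdots a_n]\bigr).$$
The map is well defined on each $GL_n(A)$ because a row-monomic invertible matrix admits a unique decomposition $D\sigma$, and it is compatible with stabilization $GL_n(A) \hookrightarrow GL_{n+1}(A)$, which adjoins a $1$ to the diagonal and keeps the permutation unchanged. For multiplicativity, the commutation identity $\sigma D(b_1, \ldots, b_n) \sigma^{-1} = D(b_{\sigma(1)}, \ldots, b_{\sigma(n)})$ from Proposition \ref{prop: E(A)IsPerfect} gives
$$D(a_1, \ldots, a_n)\sigma \cdot D(b_1, \ldots, b_n)\tau = D(a_1 b_{\sigma(1)}, \ldots, a_n b_{\sigma(n)}) \sigma\tau,$$
whose product of diagonal entries equals $(a_1 \cdots a_n)(b_1 \cdots b_n)$ in the abelian group $(A^\times)^{ab}$, while $\mathrm{sgn}(\sigma\tau) = \mathrm{sgn}(\sigma)\,\mathrm{sgn}(\tau)$.

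Finally, $\phi$ is surjective: a single transposition matrix is sent to $(1,0)$, and each diagonal matrix $D(a, 1, \ldots, 1)$ is sent to $(0,[a])$. Its kernel is exactly $E(A)$ by Proposition \ref{prop: E(A)IsPerfect}(1). Hence $\phi$ descends to an isomorphism $GL(A)/E(A) \cong \Z/2 \oplus (A^\times)^{ab}$, which combined with the identification in the first paragraph proves the corollary. No substantive obstacle is anticipated; the only delicate point is confirming that the ``determinant'' factor $[a_1 \cdots a_n]$ is insensitive to the ambiguity in the decomposition $D\sigma$ and to the stabilization process, and both reductions are immediate from the commutation identity above.
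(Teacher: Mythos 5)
Your proof is correct. The paper gives no argument of its own for this corollary (it calls the fact clear from the definition and refers to \cite{So} for details), and what you wrote is precisely the intended reasoning: $K_1(A)=\pi_1(BGL(A)^+)\cong GL(A)/E(A)=GL(A)^{ab}$ by the defining property of the $+$-construction, and Proposition \ref{prop: E(A)IsPerfect}(1) identifies $E(A)$ as exactly the kernel of your $\bigl(\mathrm{sgn},\ [a_1\cdots a_n]\bigr)$ homomorphism, whose surjectivity then yields $GL(A)/E(A)\cong \Z/2\oplus (A^\times)^{ab}$.
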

The following fact is also standard. One may refer \cite{Srinivas} for a proof.
\begin{corollary}\label{corr:K2IsTheKernel} $K_2(A)$ is the kernel of the universal central extension of $E(A)$.
\end{corollary}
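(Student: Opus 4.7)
The plan is to imitate the classical argument for rings (see Srinivas, Chapter 5). I would reduce the corollary in two steps: first identify $K_2(A) = \pi_2(BGL(A)^+)$ with the Schur multiplier $H_2(E(A);\Z)$, and then invoke the purely group-theoretic theorem of Kervaire that for a perfect group $G$, the Schur multiplier $H_2(G;\Z)$ is canonically the kernel of its universal central extension. The second step requires nothing about $A$ beyond the perfectness of $E(A)$ already established in Proposition \ref{prop: E(A)IsPerfect}, so all the work is concentrated in the first step.

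For that first step I would argue as follows. By construction $\pi_1(BGL(A)^+) = GL(A)/E(A)$, and the inclusion $BE(A) \hookrightarrow BGL(A)$ plus-constructs to a map $BE(A)^+ \to BGL(A)^+$ which I claim is the universal cover of $BGL(A)^+$. Indeed, since $E(A)$ is perfect the space $BE(A)^+$ is simply connected, and applying the +-construction to the fibration sequence $BE(A) \to BGL(A) \to B(GL(A)/E(A))$ one obtains a fibration sequence $BE(A)^+ \to BGL(A)^+ \to B(GL(A)/E(A))$ with simply connected fiber and $K(\pi,1)$ base, which identifies the fiber with the universal cover. Consequently $\pi_2(BGL(A)^+) \cong \pi_2(BE(A)^+)$. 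Since the plus-construction is a homology isomorphism, $H_*(BE(A)^+;\Z) \cong H_*(E(A);\Z)$, and the Hurewicz theorem applied to the simply connected space $BE(A)^+$ yields
\[
K_2(A) \;=\; \pi_2(BGL(A)^+) \;\cong\; \pi_2(BE(A)^+) \;\cong\; H_2(BE(A)^+;\Z) \;\cong\; H_2(E(A);\Z).
\]

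The main conceptual point, rather than an obstacle, is to notice that every step above is either purely topological (properties of the +-construction, covering spaces, Hurewicz) or purely group-theoretic (Kervaire's identification of $H_2$), and therefore uses nothing about $A$ beyond the existence of a perfect normal subgroup $E(A)$ of $GL(A)$. Thus the classical ring-theoretic proof transfers verbatim to the monoid setting, and I expect no genuine obstacle beyond careful bookkeeping; a brief appeal to the standard references (Srinivas, or Milnor's book on $K_2$) for the last two ingredients should suffice.
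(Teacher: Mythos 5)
Your proposal is correct and is essentially the approach the paper itself takes: the paper gives no argument beyond citing \cite{Srinivas}, and the standard proof there is exactly your two steps (identify $K_2(A)=\pi_2(BGL(A)^+)$ with $H_2(E(A);\Z)$ via the universal cover $BE(A)^+$ and Hurewicz, then invoke the Kervaire--Milnor theorem that the kernel of the universal central extension of a perfect group is its Schur multiplier). The only step to cite rather than assert is that $BE(A)^+\to BGL(A)^+\to B(GL(A)/E(A))$ is a fibration sequence (equivalently, that $BE(A)^+$ is the universal cover of $BGL(A)^+$); this is the standard plus-construction lemma in \cite{Srinivas} and uses only the normality and perfectness of $E(A)$ established in Proposition \ref{prop: E(A)IsPerfect}.
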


\begin{proposition}\label{prop:UnitsOfMonoidsCompletelyDeterminsKGroups} If $A$ and $B$ are monoids and there is an isomorphism $f: A^\times \cong B^\times$, then $f$ induces isomorphisms $K_i(A)\cong K_i(B)$ for any $i\geq 1.$
\end{proposition}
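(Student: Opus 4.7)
The plan is to show that the group $GL(A)$, together with its perfect normal subgroup $E(A)$, depends functorially only on the group of units $A^\times$, so the pair $(BGL(A), E(A))$ is determined up to isomorphism by $A^\times$. The $+$-construction is functorial in such data, and this gives the desired isomorphism on $K_i$.

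First I would exploit the explicit description of $GL_n(A)$ given right before Proposition \ref{prop: E(A)IsPerfect}: every invertible matrix is uniquely of the form $D(a_1,\dots,a_n)\sigma$ with $a_i \in A^\times$ and $\sigma$ a permutation matrix. Combined with the identity $\sigma D(b_1,\dots,b_n)\sigma^{-1} = D(b_{\sigma(1)},\dots,b_{\sigma(n)})$ used in the proof of that proposition, this says exactly that $GL_n(A)$ is the wreath product $A^\times \wr S_n = (A^\times)^n \rtimes S_n$, in which multiplication depends only on the group structure of $A^\times$ and the $S_n$-action on $(A^\times)^n$ by permutation of coordinates. In particular, neither the multiplication in $A$ outside $A^\times$ nor the absorbing element play any role.

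Next I would note that the standard inclusion $GL_n(A) \hookrightarrow GL_{n+1}(A)$, adjoining a $1$ in the bottom right corner, corresponds under this identification to the evident inclusion $A^\times\wr S_n \hookrightarrow A^\times\wr S_{n+1}$. Therefore an isomorphism $f\colon A^\times \to B^\times$ induces, by applying $f$ coordinatewise and the identity on $S_n$, a compatible family of isomorphisms $GL_n(A) \to GL_n(B)$, and in the colimit an isomorphism $\varphi\colon GL(A) \xrightarrow{\sim} GL(B)$. By the characterization in Proposition \ref{prop: E(A)IsPerfect}(1), $E(A)$ consists precisely of matrices $D(a_1,\dots,a_n)\sigma$ with $\sigma$ even and $\prod a_i \in [A^\times,A^\times]$; since $f$ is a group isomorphism it sends commutators to commutators, so $\varphi$ restricts to an isomorphism $E(A) \xrightarrow{\sim} E(B)$.

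Finally I would apply the classifying space functor to obtain a homotopy equivalence $BGL(A) \simeq BGL(B)$ that identifies the subgroup $E(A) \subset \pi_1(BGL(A))$ with $E(B) \subset \pi_1(BGL(B))$. By the universal property of Quillen's $+$-construction (the uniqueness up to homotopy of the plus space with respect to a prescribed perfect normal subgroup of $\pi_1$), this equivalence extends to a homotopy equivalence $BGL(A)^+ \simeq BGL(B)^+$. Taking homotopy groups yields $K_i(A) \cong K_i(B)$ for all $i\geq 1$. The only subtle point is the bookkeeping to confirm that $\varphi$ carries $E(A)$ onto $E(B)$, but this is immediate from part (1) of Proposition \ref{prop: E(A)IsPerfect}, so there is no real obstacle.
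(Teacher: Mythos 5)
Your proof is correct and is essentially the paper's argument: the paper's entire proof is the one-line observation that $f$ induces an isomorphism $GL(A)\cong GL(B)$, and your write-up simply fills in the details (the wreath-product description of $GL_n$, compatibility with stabilization, preservation of $E(A)=[GL(A),GL(A)]$, and functoriality of the $+$-construction).
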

\begin{proof} This is because $f$ induces an isomorphism $GL(A)\cong GL(B)$.
\end{proof}

\begin{corollary} If $A$ has an absorbing element, $K_i(A)\cong K_i(A_*)$ for any $i\geq 1$.
\end{corollary}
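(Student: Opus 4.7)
The plan is to reduce immediately to Proposition \ref{prop:UnitsOfMonoidsCompletelyDeterminsKGroups}: it suffices to produce an isomorphism of groups $A^\times \cong (A_*)^\times$, since that proposition then yields $K_i(A)\cong K_i(A_*)$ for every $i\geq 1$.

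To construct this isomorphism, I would unpack the definition of $A_*$ from Example \ref{example:Monoids}. By construction the newly adjoined point $*\in A_*$ is absorbing: $*\cdot x = x\cdot *=*$ for all $x\in A_*$. In particular $*$ is not a unit. Next I would check that the original absorbing element $0\in A$ also fails to be a unit in $A_*$: for any $a\in A$ one has $0\cdot a = 0 \neq 1$ (using that $0$ was absorbing in $A$), while $0\cdot * = *\neq 1$, so no element of $A_*$ can serve as an inverse of $0$. Finally, for any $a\in A\setminus\{0\}$ an inverse of $a$ in $A_*$ must lie in $A\setminus\{0\}$ (since $*$ is absorbing and $0$ is not invertible), and the products in $A_*$ among such elements coincide with those in $A$. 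Hence $a$ is invertible in $A_*$ if and only if it is invertible in $A$, and the inclusion $A\hookrightarrow A_*$ restricts to a group isomorphism
\[
A^\times \xrightarrow{\;\cong\;} (A_*)^\times.
\]

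Applying Proposition \ref{prop:UnitsOfMonoidsCompletelyDeterminsKGroups} to this isomorphism gives $K_i(A)\cong K_i(A_*)$ for all $i\geq 1$, which is exactly what the corollary asserts. I do not anticipate a real obstacle here; the only point that requires care is verifying that adjoining a fresh absorbing element $*$ does not turn the old absorbing element $0$ into a unit, but this is immediate from the defining relations of $A_*$.
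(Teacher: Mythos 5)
Your proposal is correct and follows exactly the paper's route: reduce to Proposition \ref{prop:UnitsOfMonoidsCompletelyDeterminsKGroups} via the observation that the inclusion $A\hookrightarrow A_*$ identifies $A^\times$ with $(A_*)^\times$. The paper simply states this identification of unit groups without the explicit verification you supply, so your write-up is just a more detailed version of the same argument.
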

\begin{proof} This statement follows from Proposition \ref{prop:UnitsOfMonoidsCompletelyDeterminsKGroups} and the fact that $A$
 and $A_*$ has the same group of invertible elements.
\end{proof}

\section{Modules over monoids}\label{ModulesOverMonoids}

\subsection{The category of modules over monoids}\label{TheCategoryOfModulesOverMonoids}
The constructions in this subsection are standard and it would be no harm for not interested readers to skip this subsection.
Let $A$ be a monoid. If $A$ is commutative, the category of $A$-sets was defined in \cite{COR}. In this subsection, we generalize the constructions to noncommutative monoids. One can also regard this subsection as a continuation of \cite{K72} to the case that both monoids and acts are based.

Let $\cS_*$ denote the category of pointed sets. If $M$ is a pointed set, then $Hom_{\cS_*}(M, M)$ is a monoid with absorbing element the constant map which has $\{*\}$ as the image. We  write $0$ for the element $*$ if there is no confusion.
 \begin{definition}
 A pointed set $M$ is called a left $A$-set if there is a monoid morphism $A\to Hom_{\cS_*}(M, M)$ and $M$ is a right $A$-set if there is a monoid morphism $A^{op}\to Hom_{\cS_*}(M, M)$ where $A^{op}$ is the opposite monoid of $A$.
\end{definition}

In this paper, we only consider left $A$-sets. Let $M$ be an $A$-set. For any $a\in A$ and $m\in M$, we write $am$ for the image of $m$ under the map $a$. Let $Am$ be the subset $\{am|a\in A\}$, it is a submodule of $M$. If there exist  finitely many elements $m_1, \cdots, m_n\in M$ such that $M=Am_1\cup \cdots \cup Am_n$, then $M$ is said to be {\it finitely generated}. An equivalence relation ``$\sim$'' on $M$ is called a {\it congruence} if $x\sim y$ implies $ax\sim ay$ for any $a\in A$. If $\sim$ is a congruence on $M$, the quotient set $M/\sim$ is defined as  usual. That is, the underlying set of $M/\sim$ is the set of equivalence classes of $\sim$. If we use $\overline{m}$ to denote the equivalence class of $m$, then we define $a\overline{m}=\overline{am}$ for any $a\in A$. The quotient map from $M$ to $M/\sim$ is an $A$-set morphism.

Let $M$ and $N$ be $A$-sets. A map of based sets $f:M\to N$ is called an $A$-set morphism or simply morphism if $f(am)=af(m)$ for all $a\in A$ and $m\in M$. Let $A-\mathcal{S}et$ denote the category of left $A$-sets and $A$-set morphisms. The object $*$ is both an initial and terminal object in $A-\mathcal{S}et$. The {\it kernel} and {\it cokernel} of a morphism $f:M\to N$ is defined as equalizer and coequalizer of the following diagram, respectively.
$$\xymatrix{ M \ar@<.5ex>[r]^f  \ar@<-.5ex>[r]_{*}
 &  N
}$$
Since the objects of $A-\mathcal{S}et$ are sets, we  identify kernel (cokernel) as sub objects (quotient objects) rather than abstract morphisms.

\begin{lemma}\label{lemma:EqualizaorAndCoequalizerOfAModulesExist} In the category $A-\mathcal{S}et$, equalizer and coequalizer of any diagram $\xymatrix{ M \ar@<.5ex>[r]^f  \ar@<-.5ex>[r]_{g}
 &  N
}$ exist.  In particular, all morphisms  have kernel and cokernel.
\end{lemma}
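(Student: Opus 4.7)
The plan is to construct both the equalizer and the coequalizer explicitly as subsets and quotient sets of $M$ and $N$ respectively, then verify the universal property; kernel and cokernel then come out as the special case $g = *$.

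For the equalizer, I would set
\[
E = \{m \in M \mid f(m) = g(m)\} \subseteq M.
\]
First I observe that $* \in E$ because $f$ and $g$ are pointed maps, so $E$ is a pointed subset. Next, $E$ is stable under the $A$-action: if $m \in E$ and $a \in A$, then $f(am) = af(m) = ag(m) = g(am)$. So $E$ is an $A$-subset of $M$, and the inclusion $\iota \colon E \hookrightarrow M$ is an $A$-set morphism with $f \iota = g \iota$. The universal property is then immediate from its version in $\cS_*$: given any $A$-set $P$ with an $A$-morphism $h \colon P \to M$ satisfying $fh = gh$, the image of $h$ lies in $E$, so $h$ factors uniquely through $\iota$, and the factorization is automatically an $A$-map because $\iota$ is an injection of $A$-sets.

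For the coequalizer, I would build the smallest congruence on $N$ that identifies $f(m)$ with $g(m)$ for all $m \in M$. The set of all congruences on $N$ containing the relation
\[
R = \{(f(m), g(m)) \mid m \in M\}
\]
is nonempty, since the total relation $N \times N$ (which corresponds to collapsing $N$ to $*$) is such a congruence; and an arbitrary intersection of congruences on $N$ is again a congruence, because the defining conditions (equivalence relation plus compatibility with each $a \in A$) are preserved under intersection. So let $\sim$ be the intersection of all such congruences, set $Q = N/\!\sim$, and let $q \colon N \to Q$ be the quotient $A$-map (as defined in the paragraph preceding the lemma). Then $qf = qg$ by construction.

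For the universal property of $q$, given any $A$-map $h \colon N \to P$ with $hf = hg$, the relation on $N$ given by $x \equiv y \iff h(x) = h(y)$ is a congruence containing $R$, hence contains $\sim$, so $h$ descends uniquely through $q$; the induced map on underlying pointed sets is automatically an $A$-set morphism because $q$ is a surjective $A$-map. Finally, kernel and cokernel are obtained by taking $g$ to be the zero morphism $M \to N$; the equalizer becomes the usual pointed preimage $f^{-1}(*)$ with its induced $A$-action, and the coequalizer becomes the quotient of $N$ by the smallest congruence collapsing $f(M)$ to the basepoint.

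The only non-routine step is the coequalizer: one must be careful that the naive set-theoretic quotient by the equivalence relation generated by $R$ need not carry a well-defined $A$-action, which is precisely why I pass to the smallest congruence rather than the smallest equivalence relation. Once that point is in place, everything else is a direct check of universal properties.
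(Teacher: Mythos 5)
Your proposal is correct and follows essentially the same strategy as the paper: the equalizer is the pointed sub-$A$-set $\{m\in M\mid f(m)=g(m)\}$, and the coequalizer is the quotient of $N$ by the congruence generated by identifying $f(m)$ with $g(m)$, with kernel and cokernel as the special case $g=*$. The only difference is cosmetic: you produce that congruence top-down as an intersection of all congruences containing $R=\{(f(m),g(m))\}$, whereas the paper generates it bottom-up by chains of ``directly related'' elements --- and since $R$ is stable under the $A$-action (because $af(m)=f(am)$, $ag(m)=g(am)$), the equivalence relation it generates is already a congruence, so your extra caution on that point, while harmless, is not actually needed here.
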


\begin{proof}
The equalizer is the submodule $\{ m\in M \big\vert f(m)=g(m)\}$. In particular, $f^{-1}(*)$ is the kernel of $f$.

Elements $x,y\in N$ are said to be direct related by $f$ and $g$ if any of the following statements is true.
\begin{enumerate}\item[1)] $x=y$.
\item[2)] $x=f(m)$ and $y=g(m)$ for some $m\in M$.
\item[3)] $x=g(m)$ and $y=f(m)$ for some $m\in M$.
\end{enumerate}
Note that $x$ is directly related to $y$ implies $ax$ being directly related to $ay$ for any $a\in A$. Define an equivalent relation $\sim$ in $N$ as follows. For any $x, y\in N$,  $x$ is related to $y$ if and only if there exists finitely many points $x=x_0, x_1, x_2, \cdots, x_n=y$ such that $x_i$ is directly related to $x_{i+1}$ for any $0\leq i \leq (n-1)$. One checks that $\sim$ is a congruence on $N$ and the coequalizer of $f$ and $g$ is the quotient $A$-set $A/\sim$.

Since $f(M)$ is a submodule of $N$ and  any submodule defines a congruence in a natural way, the quotient set $N/f(M)$ is defined. The underlying set of $N/f(M)$ is the set $\big(N-f(M)\big) \cup \{*\}$. One checks that $N/f(M)$ is the cokernel of $f$.
\end{proof}

\begin{remark} It follows from the proof of the previous lemma that cokernel of a morphism $f$ being zero implies that $f$ is onto. But kernel of $f$ being zero does not imply that $f$ is injective.
\end{remark}

\begin{lemma}\label{lemma:ProductAndCoproductExist} Arbitrary product and arbitrary coproduct exist in $A-\mathcal{S}et$.
\end{lemma}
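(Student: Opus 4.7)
The plan is to construct the product and coproduct by building on the known product and coproduct in the category $\cS_*$ of pointed sets and then equipping them with naturally induced $A$-actions. Both constructions should be set-theoretically standard; the content of the lemma is to verify that the $A$-structure passes through and satisfies the appropriate universal properties.

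For an arbitrary family $\{M_i\}_{i\in I}$ of $A$-sets, I would take the underlying pointed set of the product to be the Cartesian product $\prod_{i\in I} M_i$ with basepoint $(*_i)_{i\in I}$, and define the $A$-action componentwise by $a\cdot (m_i)_{i\in I} = (a m_i)_{i\in I}$. Since each $a\in A$ fixes $*_i$ in $M_i$, the basepoint of the product is fixed, so this is a well-defined $A$-set. The projection maps $p_j\colon \prod_i M_i \to M_j$ are $A$-set morphisms by construction, and given any $A$-set $N$ with morphisms $f_i\colon N\to M_i$, the unique pointed map $f\colon N\to \prod_i M_i$ with $p_i\circ f = f_i$ is automatically $A$-equivariant because each component is. This verifies the universal property of the product.

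For the coproduct I would take the wedge sum $\bigvee_{i\in I} M_i$, i.e., the quotient of the disjoint union $\bigsqcup_i M_i$ by the congruence identifying all basepoints $*_i$ to a single point $*$. The $A$-action on the disjoint union is the obvious one (acting inside each $M_i$), and this action descends to the wedge because $a\cdot *_i = *_i$ for every $i$, so the congruence is preserved in the sense needed. The canonical inclusions $\iota_j\colon M_j \to \bigvee_i M_i$ are $A$-set morphisms, and for any family $g_i\colon M_i \to N$ of $A$-set morphisms, the induced pointed map $g\colon \bigvee_i M_i\to N$ with $g\circ \iota_i = g_i$ is $A$-equivariant because each restriction $g|_{M_i} = g_i$ is.

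There is essentially no obstacle here beyond bookkeeping: the main thing to observe is that a left $A$-set is nothing but a pointed set with a compatible $A$-action, so any limit or colimit construction in $\cS_*$ whose forgetful diagram admits a componentwise/pointwise $A$-action will automatically furnish the corresponding limit or colimit in $A\text{-}\cS et$. One could alternatively phrase the proof as: the forgetful functor $A\text{-}\cS et \to \cS_*$ creates products and coproducts, and $\cS_*$ is known to have arbitrary products (Cartesian) and coproducts (wedge). I would likely include the concrete descriptions above rather than this abstract remark, since they will be used explicitly in later sections.
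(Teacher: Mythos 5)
Your proof is correct and follows exactly the same route as the paper: the product is the Cartesian product with componentwise $A$-action and the coproduct is the wedge, with the universal properties inherited from pointed sets. The paper merely states these constructions without the verification you spell out, so your write-up is a fleshed-out version of the same argument.
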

\begin{proof}
Product is given by the usual cartesian product $\prod$ and the coproduct is given by the usual wedge product $\bigvee$.
\end{proof}

\begin{corollary} The category $A-\mathcal{S}et$ is complete and cocomplete.
\end{corollary}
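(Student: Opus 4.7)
The plan is to invoke the standard categorical principle that a category $\mathcal{C}$ is complete if and only if it has all small products and all equalizers, and cocomplete if and only if it has all small coproducts and all coequalizers. The proof of this principle is routine: given a small diagram $F\colon J\to\mathcal{C}$, one realizes $\lim F$ as the equalizer of the two natural parallel maps
\[
\prod_{j\in\mathrm{Ob}(J)} F(j) \rightrightarrows \prod_{(u\colon i\to j)\in\mathrm{Arr}(J)} F(j),
\]
one given by the projections $\mathrm{pr}_j$ and the other by $F(u)\circ\mathrm{pr}_i$; the construction of $\mathrm{colim}\, F$ is dual, using coproducts and a coequalizer.

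With this reduction in hand, it suffices to verify that $A$-$\mathcal{S}et$ admits the four ingredients in their fully small form. First I would appeal to Lemma \ref{lemma:EqualizaorAndCoequalizerOfAModulesExist}, which supplies equalizers and coequalizers of arbitrary parallel pairs of $A$-set morphisms, and then to Lemma \ref{lemma:ProductAndCoproductExist}, which supplies arbitrary (not merely finite) products via the cartesian product $\prod$ and arbitrary coproducts via the wedge $\bigvee$. Plugging these into the formula above produces a limit and a colimit for every small diagram in $A$-$\mathcal{S}et$, so both completeness and cocompleteness follow.

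There is essentially no obstacle to this argument, since the real work was done in the two preceding lemmas. The only point one might wish to double-check is the pointed-set compatibility required by the universal formula: the equalizer subset produced in Lemma \ref{lemma:EqualizaorAndCoequalizerOfAModulesExist} always contains the basepoint (because every $A$-set morphism preserves it, so $f(*)=*=g(*)$) and is preserved by the $A$-action, so it is a genuine sub-$A$-set; similarly, the wedge product identifies basepoints exactly as the universal property of a coproduct in a pointed category demands. Both verifications are immediate from the explicit descriptions already given, so the corollary follows.
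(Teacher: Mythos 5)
Your argument is correct and is exactly the intended one: the paper deduces the corollary directly from Lemma \ref{lemma:EqualizaorAndCoequalizerOfAModulesExist} and Lemma \ref{lemma:ProductAndCoproductExist} via the standard fact that all small products plus equalizers give completeness, and dually for cocompleteness. Your extra check that the constructions are genuinely pointed sub/quotient $A$-sets is a harmless elaboration of what the lemmas already establish.
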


\begin{remark} As we can see from the constructions in Lemma \ref{lemma:EqualizaorAndCoequalizerOfAModulesExist} and Lemma \ref{lemma:ProductAndCoproductExist}, the underlying pointed set of a limit or colimit of $A$-sets can be calculated as
 the same limit or colimit of the underlying based set of $A$-sets. In other words, limit and colimit commutes with the forgetful functor from
 $A$-sets to pointed sets.
\end{remark}

\begin{example} Let $\Fun$ be the monoid $\{0, 1\}$. The category of $\Fun$-modules is the category of pointed sets, which is well known to be complete and cocomplete.
\end{example}

Let $A$ be a monoid, then $A-\mathcal{S}et$ embeds as a subcategory of $\Fun-\cM od$ by forgetting the $A$-set structure. This forgetful functor has a left adjoint, which we denote by $F$ for the moment. For a pointed set $X$, this discussion justifies that we can call $F(X)$ the {\it free $A$-set} generated by $X$ (or more appropriately by $X-\{*\}$). It is obvious that
$$F(X)\cong \bigvee\limits_{x\in X-\{*\}}Ax.$$

We now study the tensor functor and the $Hom$-functor. If $A$ and $B$ are monoids, an {\it $(A,B)$-biset} is a set $M$ which is a left $A$-set and also a right $B$-module such that $(am)b=a(mb)$ for any $a\in A, m\in M$ and $b\in B$. Here we adopt the usual convention to write $b(m)$ as $mb$ since $M$ is a right $B$-module.  If $M$ is an $(A,B)$-biset and $N$ is a $(B, C)$-biset, there is an equivalence relation on the set theoretic cartesian product $M\times N$  which is generated by $(mb,n)\sim (m, bn)$. The set $M\times N$ is naturally an $(A, C)$-biset and the equivalence is a congruence with respect to both $A$ and $C$. So the set of equivalent classes is an $(A, C)$-biset, which is defined as the tensor product $M\otimes_BN$. The following lemma states some standard properties of the tensor product of sets over monoids. These properties are well known for tensor products of modules over rings. The proof of the lemma is also the same as in the case of modules over rings and hence skipped.

\begin{lemma}Let $A, B, C$ and $D$ be monoids. Let $M$ be an $(A,B)$-biset,  $N$ a $(B, C)$-biset and $P$ a $(C, D)$-biset.
\begin{enumerate}
\item[(1)] Associative. $(M\otimes N) \otimes P \cong M\otimes (N\otimes P)$ as $(A, D)$-bisets.
\item[(2)] Symmetric. If $A$ is commutative and $M$ and $N$ are $A$-sets, then $M\otimes N\cong N\otimes M$ as $A$-sets.
\item[(3)] Functorial. Tensor product $-\otimes-$ is a bifunctor.
\end{enumerate}
\end{lemma}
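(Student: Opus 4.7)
The plan is to imitate the classical ring-theoretic argument, using a universal property of the tensor product in place of direct manipulation of representatives. Concretely, for any $(A,C)$-biset $Q$, I would first verify that morphisms $M\otimes_B N\to Q$ correspond bijectively to set-maps $\varphi\colon M\times N\to Q$ satisfying (i) $\varphi(am,nc)=a\,\varphi(m,n)\,c$, (ii) $\varphi(mb,n)=\varphi(m,bn)$, and (iii) $\varphi$ sends any pair containing a basepoint to $*$. This follows immediately from the construction in the excerpt, since $M\otimes_B N$ is the quotient of $M\times N$ by the smallest congruence forcing (ii) together with the biset axioms, with the basepoint conditions absorbed automatically.

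Granting this universal property, the three assertions reduce to producing balanced maps. For associativity, I would fix $p\in P$ and observe that $(m,n)\mapsto m\otimes(n\otimes p)$ is a $B$-balanced, $(A,C)$-biequivariant map $M\times N\to M\otimes_B(N\otimes_C P)$, hence descends to $(M\otimes_B N)\times\{p\}\to M\otimes_B(N\otimes_C P)$. Letting $p$ vary, the resulting map $(M\otimes_B N)\times P\to M\otimes_B(N\otimes_C P)$ is $C$-balanced, so it factors through $(M\otimes_B N)\otimes_C P$. The inverse map is constructed symmetrically, and the two compositions are the identity on generating tensors $(m\otimes n)\otimes p$ and $m\otimes(n\otimes p)$, which suffices by the universal property. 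For symmetry, when $A=B=C$ is commutative I would check that $(m,n)\mapsto n\otimes m$ is $A$-balanced — the commutativity of $A$ is exactly what is needed to reconcile the left and right actions after the swap. For functoriality, given $f\colon M\to M'$ of $(A,B)$-bisets and $g\colon N\to N'$ of $(B,C)$-bisets, the map $(m,n)\mapsto f(m)\otimes g(n)$ is balanced because $f$ and $g$ preserve the biset structures and hence the $B$-action on either side, so it induces $f\otimes g\colon M\otimes_B N\to M'\otimes_B N'$; compatibility with compositions and identities is then immediate on representatives.

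The only genuinely new step compared to the ring case is the verification of the universal property itself, since the congruence defining $M\otimes_B N$ is generated by an explicit relation rather than obtained by quotienting a free module by an additive subgroup. The main obstacle I anticipate — and it is a mild one — is therefore just careful bookkeeping: one must check that the generating relation $(mb,n)\sim(m,bn)$ together with closure under the left $A$- and right $C$-actions produces a congruence for which the three conditions (i)–(iii) on $\varphi$ are precisely what is needed for a set-theoretic extension to the quotient to be well defined and biequivariant. Once this is in hand, none of the three parts of the lemma requires any argument beyond writing down the balanced map and invoking the universal property.
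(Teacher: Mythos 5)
Your proposal is correct, and it is essentially the argument the paper has in mind: the paper explicitly skips the proof as being ``the same as in the case of modules over rings,'' and your write-up is exactly that standard argument (universal property of the quotient by the balancing congruence, then balanced maps for associativity, symmetry via commutativity, and functoriality), carefully adapted to the pointed/monoid setting. Your observation that the basepoint identifications come for free is right, since the zero of $B$ acts as the constant map, so $(*,n)\sim(m,*)$ already follows from the generating relation $(mb,n)\sim(m,bn)$.
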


If $M$ is an $(A, B)$-biset and $P$ is an $(A, C)$-biset, then the set of $A$-set morphisms $Hom_A(M, P)$ is a $(B, C)$-biset. If $f\in Hom_A(M,P)$, then $bfc$ is the map sending $m\in M $ to $f(mb)c\in P$. Similarly, if $N$ is a $(B, C)$-biset, then $Hom_C(N, P)$ is an $(A,B)$-biset. One can show that $Hom_A(-, -)$ is a bifunctor and that the tensor functor is left adjoint to an appropriate Hom-functor. For example, we have natural bijection of sets:
$$\begin{array}{ccc} Hom_{A,C}(M\otimes N, P)&\cong& Hom_{B,C}(N, Hom_A(M, P))\\
&\cong& Hom_{A,B}(M, Hom_C(N, P)).\end{array} $$

\begin{example} The category $A-\mathcal{S}et$ is naturally equivalent to the category of $(A, \Fun)$-bisets. For any $A$-set $M$, there is a covariant functor $$Hom_A(M, -): A-\mathcal{S}et \longrightarrow \Fun-\cM od.$$
\end{example}

\subsection{Normal morphisms and short exact sequences}

Let $A$ be a monoid as before. In this subsection, we check that Quillen's Q-construction can also be applied on $A-\mathcal{S}et$ to defined algebraic K-theory of $A$. Recall that an $A$-set $P$ is projective if the induced map $Hom(P, M) \to Hom(P,N)$ is onto whenever $M\to N$ is onto. One can prove as in the usual case of $A$-acts that $P$ is projective if and only if $A= \bigvee e_iA$ where $e_i^2=e_i$ is an idempotent in $A$. So a projective $A$-set is a retract but in general not a direct summand of a free $A$-set.

Now we define short exact sequences in $A- \cM od$ and make $A- \cM od$ into a {\it quasi-exact category} as introduced in \cite{D06}. Quasi-exact categories are non-additive version of the exact categories of Quillen \cite{Quillen341}. Let \begin{equation}\label{TheSequence} 0\longrightarrow M \stackrel{i}\longrightarrow N \stackrel{j}\longrightarrow K \longrightarrow 0 \end{equation} be a sequence in $A-\mathcal{S}et$.  As discussed in \cite{COR}, to obtain the algebraic K-theory of $A$ using $A$-sets, we can not define (\ref{TheSequence}) to be exact simply by requiring kernel equals image at all terms. We also need to require $i$ and $j$ to be {\it normal}.

\begin{definition} A morphism $f: M\to N$ of $A$-sets is called normal if $f$ is injective when restricted on $M-\ker(f)$. The sequence (\ref{TheSequence}) is called an admissible short exact sequence
if kernel equals image at each term and $i$ and $j$ are normal morphisms. A morphism in $A-\mathcal{S}et$ is called an admissible monomorphism (epimorphism) if it appears as the map $i$ ($j$) in some admissible short exact sequence where all $A$-sets are projective.
\end{definition}

An admissible short exact sequence $$ 0\longrightarrow M \stackrel{i}\longrightarrow N \stackrel{j}\longrightarrow K \longrightarrow 0$$  is called {\it splitting exact} if there exists $s: K \to N$ such that $j\circ s$ is the identity map on $K$. It is easy to check that an admissible short exact sequence is splitting exact if and only it is isomorphic to the standard short exact sequence
$$ 0\longrightarrow M  \longrightarrow M\vee K \longrightarrow K \longrightarrow 0.$$

\begin{proposition}\label{prop: ShortExactSequencesAreSplitting} The admissible short exact sequence $$ 0\longrightarrow M \stackrel{i}\longrightarrow N \stackrel{j}\longrightarrow K \longrightarrow 0$$ is splitting exact if $K$ is projective.
\end{proposition}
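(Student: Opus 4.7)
The strategy is the standard one: use the projectivity of $K$ to lift the identity map along $j$, producing a section of $j$, and then invoke the characterization of splitting exactness recorded in the paragraph just before the proposition.

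First I would verify that $j$ is surjective as a map of $A$-sets. By the admissibility of the sequence, the image of $j$ equals the kernel of the zero map $K \to *$; since the unique morphism $K \to *$ sends every element of $K$ to the basepoint, its kernel (defined as the equalizer in Lemma \ref{lemma:EqualizaorAndCoequalizerOfAModulesExist}) is all of $K$. Hence $j(N) = K$, i.e. $j$ is onto.

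Next, because $K$ is projective, the defining lifting property applied to the surjection $j \colon N \to K$ and the identity morphism $\id_K \colon K \to K$ produces a morphism $s \colon K \to N$ with $j \circ s = \id_K$. This is exactly the data required by the definition of a splitting exact sequence given just before the proposition, so the sequence is splitting exact, and by the equivalence recorded there it is canonically isomorphic to the standard short exact sequence
\[
0 \longrightarrow M \longrightarrow M \vee K \longrightarrow K \longrightarrow 0
\]
via the map $M \vee K \to N$ sending $m \in M$ to $i(m)$ and $k \in K$ to $s(k)$.

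There is no real obstacle; the only point requiring slight care is the identification of the kind of surjectivity demanded by projectivity (onto as a map of pointed sets) with the condition extracted from the admissibility hypothesis (image equals kernel at the final slot). Normality of $i$ and $j$ is not needed for this particular proposition, although it would be needed if one wanted to verify directly (rather than citing the preceding equivalence) that the map $M \vee K \to N$ is bijective.
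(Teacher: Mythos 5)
Your argument is correct and is essentially the paper's own proof: the paper simply notes that the section $s\colon K\to N$ exists because $K$ is projective, which is exactly your lifting of $\id_K$ along the surjection $j$ (surjectivity of $j$ coming from exactness at $K$). Your additional remarks — unwinding why $j$ is onto and observing that normality of $i$ and $j$ is not needed for this step — are fine elaborations of the same one-line argument, not a different route.
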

\begin{proof} The section $s: K \to N$ exits because $K$ is projective.
\end{proof}

Let $Proj(A)$ (resp. $Vec(A)$) denote the full subcategory of $A-\mathcal{S}et$ which contains only the finitely generated projective (resp. free) $A$-sets. Let $M$ and $N$ be objects in $Vec(A)$. If we choose an ordered set of basis for $M$ and $N$, then homomorphisms from $M$ to $N$ are one to one correspondent to $m\times n$ matrices over $A$ where $m$ and $n$ are the rank of $M$ and $N$. Composition of morphisms coincides with the matrix multiplications defined in Lemma \ref{lemma:MatrixMultiplicationOverMonoids}.

\begin{proposition}\label{prop:PropertiesOfAdmissibleMonosAndEpis}
\begin{enumerate}
      \item[(1)] The collection of admissible monomorphisms (epimorphisms)  is closed under isomorphisms. That is, for any given commutative diagram in $Proj(A)$
       $$\xymatrix{
 M \ar[r]^f \ar[d]_\cong  & N\ar[d]^\cong \\
 M'\ar[r]^{f'}  & N'
          }$$ if the vertical morphisms are isomorphisms, then $f$ is an admissible monomorphism (epimorphism) if and only $f'$ is.
\item[(2)] The collection of admissible monomorphisms (epimorphisms) is closed under composition.
\item[(3)] The collection of admissible monomorphisms (epimorphisms) is stable under base change along admissible epimorphisms (monomorphisms). That is, given a diagram $K\stackrel{f}\longrightarrow M \stackrel{g}\longleftarrow N$ of projective $A$-sets, if $f$ is an admissible monomorphism and $g$ is an admissible epimorphism, then the fiber product $K\times_M N$ exists in $A -\cM od$ and it is projective. The natural map $K\times_M N \to K$ is an admissible epimorphism and the natural map $K\times_M N \to N$ is an admissible monomorphism.
\end{enumerate}
\end{proposition}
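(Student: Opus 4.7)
The plan is to reduce the entire statement to manipulations with wedge decompositions, exploiting Proposition \ref{prop: ShortExactSequencesAreSplitting}. Since every admissible short exact sequence in $Proj(A)$ has projective cokernel, it splits, so any admissible monomorphism (resp.\ epimorphism) between projectives is, up to isomorphism, the canonical inclusion $M \hookrightarrow M \vee L$ (resp.\ projection $M \vee L \to L$) for some projective $L$. After this reduction, each of the three assertions becomes a direct verification.

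For (1), given a commutative square with vertical isomorphisms $\phi : M \to M'$, $\psi : N \to N'$ and with $f$ an admissible monomorphism fitting into $0 \to M \to N \to L \to 0$ (admissible, $L$ projective), I would transport the sequence across $\psi$ to obtain $0 \to M' \to N' \to L \to 0$, whose first map is $f'$ and whose quotient map is $p \circ \psi^{-1}$; normality of both outer maps and the kernel-equals-image condition are preserved under isomorphism, and the cokernel term $L$ is unchanged, so the new sequence is again admissible. The epimorphism version is symmetric. For (2), successively splitting the sequences attached to admissible monomorphisms $M \xrightarrow{f} N \xrightarrow{g} K$ identifies $N \cong M \vee L_1$ and then $K \cong N \vee L_2 \cong M \vee L_1 \vee L_2$, with $g \circ f$ corresponding to the first-summand inclusion; this is the first map of the split admissible sequence $0 \to M \to M \vee (L_1 \vee L_2) \to L_1 \vee L_2 \to 0$. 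The epimorphism case is dual.

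For (3), given $K \xrightarrow{f} M \xleftarrow{g} N$ with $f$ admissible mono and $g$ admissible epi, I pick splittings to write $M \cong K \vee L$ with $f$ the inclusion of $K$, and $N \cong M_0 \vee M \cong M_0 \vee K \vee L$ with $g$ killing $M_0$ and acting identically on the $K \vee L$ factor. Since limits in $A-\mathcal{S}et$ are computed on underlying pointed sets (by the remark following Lemma \ref{lemma:ProductAndCoproductExist}), I would compute the fibre product set-theoretically: the condition $f(k) = g(n)$ forces $n$ to lie either in the $K$-summand of $N$ (with $n = k$) or in $M_0$ (with $k = *$). This yields $K \times_M N \cong K \vee M_0$, which is projective as a wedge of projectives. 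The first projection is the split epimorphism $K \vee M_0 \to K$ with kernel $M_0$, hence admissible, and the second projection is the split inclusion $K \vee M_0 \hookrightarrow M_0 \vee K \vee L = N$ with cokernel $L$, hence also admissible.

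The main technical obstacle is the bookkeeping in (3): one must arrange that the splittings of the two given admissible short exact sequences produce compatible wedge decompositions of $M$ and of $N$ aligned with $f$ and $g$ simultaneously, and then check that the described subset of $N$ really carries the $A$-action of a wedge. Once this compatibility is set up, the pullback universal property comes for free because the forgetful functor to pointed sets creates limits, so the verification collapses to the set-theoretic computation above.
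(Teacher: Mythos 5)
Your proposal is correct and follows essentially the same route as the paper: both use Proposition \ref{prop: ShortExactSequencesAreSplitting} to replace admissible monomorphisms and epimorphisms (up to isomorphism, i.e.\ via part (1)) by standard wedge inclusions and projections, and then verify (2) and (3) directly, with your pullback $K\vee M_0$ being exactly the paper's $K\vee M'$ together with the same inclusion/projection maps. The set-theoretic computation of the fibre product and the identification of its kernel and cokernel terms match the paper's argument, so there is nothing to add.
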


\begin{proof} (1) is obvious.

Let $i: M\to N$ and $f: N\to P$ be admissible monomorphisms. By Proposition \ref{prop: ShortExactSequencesAreSplitting}, we can assume that $N=M\vee M'$, $P=N\vee N'$ and $i$ and  $f$ are the inclusions.  So $f\circ i$ is the inclusion $M \to P=M\vee M' \vee N'$.  We see that $M'$ and $N'$ are projective $A$-sets since $P$ is. So $f\circ i$ fits into the following admissible short exact sequence of projective $A$-sets and hence is an admissible monomorphism.
$$0\longrightarrow M \stackrel{f\circ i}\longrightarrow P \longrightarrow M'\vee N' \longrightarrow 0. $$
Similarly, one shows that composition of admissible epimorphisms is again an admissible epimorphism. This proves (2).

As we explained above, the diagram $K\stackrel{f}\longrightarrow M \stackrel{g}\longleftarrow N$ in (3) is isomorphic to
$K\stackrel{i}\longrightarrow K\vee K '\stackrel{j}\longleftarrow K\vee K' \vee M'$ for suitable projective $A$-sets $K'$ and $M'$, where $i$ is the inclusion and $j$ is the projection. One checks easily that the following diagram is a pull back diagram in $A-\mathcal{S}et$, where $j'$ is the natural projection and $i'$ is the inclusion. $$\xymatrix{ K\vee M'\ar[r]^-{i'} \ar[d]_{j'} & K\vee K'\vee M' \ar[d]^j\\
K\ar[r]^-i & K\vee K'
}$$
\end{proof}

The above proposition allows us to apply Quillens' Q-construction to $Proj(A)$. We define a new category, denoted as $QProj(A)$, as follows. Objects in $QProj(A)$ are the same objects in $Proj(A)$. A morphism from $M$ to $N$ in $QProj(A)$ is an isomorphism class of diagrams
$$\xymatrix{M & P\, \ar@{->>}[l]_-j \ar@{>->}[r]^i & N }$$ where $j$ is an admissible epimorphism and $i$ is an admissible monomorphism. Composition of this morphism with the morphism $$\xymatrix{N & Q\, \ar@{->>}[l]_-g \ar@{>->}[r]^f & R }$$ is given by
$$\xymatrix{M & P\times_N Q\, \ar@{->>}[l]_-{\alpha}  \ar@{>->}[r]^-{\beta} & N }$$ where $\alpha$ is the composition of the pull back of $g$ with $j$ and $\beta$ is the composition of the pull back of $f$ with $i$. We can assume that $Proj(A)$ is a small category and hence the geometrical realization of the nerve of $QProj(A)$ exists. We use the standard notation $|\mathcal{C}|$ to denote the geometric realization of the nerve of a small category $\mathcal{C}$.

\begin{definition} The algebraic K-groups of $A$ are defined as $$K_i(A)=\pi_{i+1}(|QProj(A)|) \mbox{  where } i\geq 0.$$
\end{definition}

\begin{proposition}\label{prop:ComputationsOnK0} $K_0(A)\cong \Z$ if $Proj(A)=Vec(A)$.
\end{proposition}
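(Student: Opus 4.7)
The plan is to invoke Quillen's classical identification $\pi_1(|Q\mathcal{E}|) \cong K_0(\mathcal{E})$, valid in Deitmar's quasi-exact setting, where $K_0(\mathcal{E})$ is presented as the free abelian group on isomorphism classes of objects modulo the relations $[N] = [M] + [K]$ imposed by each admissible short exact sequence $0 \to M \to N \to K \to 0$. Applied to $\mathcal{E} = Proj(A)$, this reduces the computation of $K_0(A)$ to a presentation on isomorphism classes of projective $A$-sets.

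Under the hypothesis $Proj(A) = Vec(A)$, every object is isomorphic to some $A^n$, and Proposition \ref{prop: ShortExactSequencesAreSplitting} forces every admissible short exact sequence to split. Hence the only surviving relations come from the canonical splittings $0 \to A^m \to A^m \vee A^n \to A^n \to 0$, which yield $[A^{m+n}] = [A^m] + [A^n]$. Consequently $K_0(A)$ is cyclic, generated by $[A]$, and it remains to show that this generator has infinite order, i.e.\ the invariant basis property $A^n \cong A^m \Longrightarrow n = m$.

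For this last step I would apply the functor $M \mapsto \Z[M \setminus \{*\}]$ sending a pointed $A$-set to the free abelian group on its non-basepoint elements, which carries a natural left $\Z[A]$-module structure and sends $A^n$ to the free $\Z[A]$-module of rank $n$; an $A$-set isomorphism $A^n \cong A^m$ then produces a $\Z[A]$-module isomorphism between free modules, and invariant basis number for the ring $\Z[A]$, witnessed for instance by the augmentation $\Z[A] \to \Z$ in the no-zero-divisor case, forces $n = m$. The main subtlety I anticipate is precisely this last step, since for monoids with zero divisors the augmentation fails to be a ring map and one must fall back on a direct Krull--Schmidt-style argument: first show $A$ is indecomposable as an $A$-set (any decomposition $A \cong M \vee N$ sends $1 \in A$ into a single summand, forcing the other to be trivial), and then track the wedge summands through an alleged isomorphism $A^n \to A^m$ using this indecomposability.
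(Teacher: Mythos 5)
Your outline is correct, but it follows a genuinely different route from the paper's. The paper's own proof is essentially a citation: it checks that $Vec(A)$ with the admissible short exact sequences is quasi-exact in the sense of \cite{D06} (this is what Proposition \ref{prop:PropertiesOfAdmissibleMonosAndEpis} supplies) and then quotes \cite[Theorem 4]{D06}, without spelling out any Grothendieck-group computation. You instead do the computation by hand: identify $\pi_1(|QProj(A)|)$ with the presentation by isomorphism classes modulo admissible sequences --- note this identification in the quasi-exact setting is precisely the role played by the paper's appeal to \cite[Theorem 4]{D06}, so you have not avoided that input, and you still need the quasi-exactness verification --- then use Proposition \ref{prop: ShortExactSequencesAreSplitting} to see that all relations come from wedge decompositions, so the group is cyclic on $[A]$, and finally establish that rank is well defined. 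Your route buys self-containedness and an explicit generator; its only delicate point is the rank step, and there your primary argument fails exactly where you fear: a monoid with zero typically has $ab=0$ with $a,b\neq 0$, the augmentation $\Z[A]\to\Z$ is then not a ring homomorphism, and IBN for $\Z[A]$ is not automatic, so the passage through $\Z[A]$-modules should be dropped. Your fallback is the right argument and should simply be the argument: the image of $1$ under any isomorphism $A\cong M\vee N$ lies in one summand and generates the image, so $A$ is indecomposable, and an isomorphism $A^{n}\cong A^{m}$ matches wedge summands bijectively. This is exactly the unlabeled lemma at the start of Section \ref{TheComparisonTheorem} of the paper (well-definedness of the rank of a free $A$-set), whose proof sends a generator $x_{i}$ into a unique $Ay_{j}$, writes $x_{i}=ay_{j}$, $y_{j}=bx_{i}$, and concludes that $a$ is invertible; you should spell out this tracking rather than leave it as a sketch. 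With that substitution (and setting aside the degenerate monoid $A=\{0\}$, tacitly excluded by both you and the paper), your proof goes through.
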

\begin{proof} One checks that $Vec(A)$, together with the admissible short exact sequence, is a quasi-exact category in the sense of \cite{D06}. So the desired isomorphism follows from \cite[Theorem 4]{D06}.
\end{proof}

\section{The comparison theorem}\label{TheComparisonTheorem}
Let $A$ be a monoid such that $Proj(A)=Vec(A)$. Let $\mathcal{S}=Iso(Vec(A))$ be the category of finitely generated free $A$-sets together with isomorphisms. $\mathcal{S}$ is a symmetric monoidal groupoid under direct sums. Recall that the Grothendieck group of $\mathcal{S}$ is defined as the Grothendieck group of the abelian monoid $\pi_0(|\mathcal{S}|)$. We claim that $K_0(\mathcal{S})\cong \mathbb{Z}$. This amounts to saying that any $M\in Vec(A)$ has a well defined rank, which is proved in the following lemma.
\begin{lemma} Let $M$ be a finitely generated free $A$-set. If $$M=\bigvee\limits_{i=1}^n Ax_i=\bigvee\limits_{j=1}^m Ay_j$$ then $m=n$ and for each $1\leq i \leq n$, there exists a unique $j$ and an invertible elements $a\in A$ such that $x_i=a y_j$.
\end{lemma}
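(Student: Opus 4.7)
The plan is to exploit the uniqueness of representation of non-basepoint elements in a free $A$-set. Since $M = \bigvee_{i=1}^n Ax_i$ and each summand $Ax_i$ is isomorphic to $A$ as an $A$-set (with basepoint $0\cdot x_i$), the map $(a,i)\mapsto a x_i$ restricts to a bijection from $(A\setminus\{0\})\times\{1,\dots,n\}$ onto $M\setminus\{*\}$: distinct summands of a wedge meet only at the basepoint, and the $A$-action on each $Ax_i$ is free (i.e.\ $a\mapsto ax_i$ is injective). The same uniqueness holds for the presentation $M=\bigvee_{j=1}^m Ay_j$, after discarding any $y_j=*$, which contributes nothing to the wedge.

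Using this uniqueness in both directions, I write each $y_j = b_j x_{\sigma(j)}$ for uniquely determined $b_j \in A\setminus\{0\}$ and $\sigma(j)\in\{1,\dots,n\}$, and similarly $x_i = c_i y_{\tau(i)}$ for uniquely determined $c_i \in A\setminus\{0\}$ and $\tau(i)\in\{1,\dots,m\}$. Substituting the second into the first yields $x_i = c_i b_{\tau(i)}\, x_{\sigma(\tau(i))}$; matching with the trivial representation $x_i = 1\cdot x_i$ then forces $\sigma(\tau(i)) = i$ and $c_i b_{\tau(i)} = 1$. The symmetric calculation, plugging the first into the second, produces $\tau(\sigma(j)) = j$ and $b_j c_{\sigma(j)} = 1$. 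Consequently $\sigma$ and $\tau$ are mutually inverse bijections, so in particular $m=n$; and setting $j=\tau(i)$ one sees that $c_i$ has both a right and a left inverse in $A$ (namely $b_{\tau(i)}$), so $c_i\in A^\times$. The uniqueness of the index $j$ in the conclusion is just the uniqueness of $\tau(i)$.

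No real obstacle arises: the argument is essentially the universal property of a free $A$-set combined with the observation that non-basepoint elements of a wedge lie in a unique summand. The only mildly delicate point is verifying $c_i\in A\setminus\{0\}$ rather than merely $c_i\in A$, but this is automatic: if $c_i=0$ then $x_i = 0\cdot y_{\tau(i)} = *$, contradicting the fact that $x_i$ generates a nontrivial free summand. Once the unique-representation principle is in hand, everything else is a matter of substituting and reading off the resulting equalities.
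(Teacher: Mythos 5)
Your proof is correct and follows essentially the same route as the paper: express each generator of one decomposition in terms of the other and use the uniqueness of representation of non-basepoint elements of a free $A$-set to force the two coefficients to be mutually inverse (and the indices to match). The only cosmetic difference is that you obtain $m=n$ by exhibiting the mutually inverse index maps $\sigma$ and $\tau$ directly, whereas the paper handles $x_1$ and then finishes by induction on $n$.
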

\begin{proof} Consider $x_1\in M =\bigvee\limits_{j=1}^m Ay_j.$ Then $x_1\in Ay_j$ for a unique $j$. This also implies that $x_1=a y_j$ for some $a\in A$. Similarly, $y_j=bx_i$ for some unique $i$ and $b\in A$. Now we see that $x_1=a y_j=abx_i$, which implies that $ab=1$ and $i=1$. We also have $y_j=bx_i=bx_1=bay_j$, which implies $ba=1$. So $a$ is invertible. Now an induction on $n$ completes the proof.
\end{proof}

The proof of $``Q=+"$ theorem of the algebraic K-theory of rings can be found in \cite{Grayson, Srinivas}. Since $Vec(A)$ is splitting exact by Proposition \ref{prop: ShortExactSequencesAreSplitting}, the same proof can be applied without any modifications to prove the following theorem. We refer \cite{Srinivas} for details on the localization category $\mathcal{S}^-\mathcal{S}$ associated to any symmetric moniodal category $\mathcal{S}$.

\begin{theorem}{\bf( Q=+ Theorem)} Let $A$ be a monoid such that projective $A$-sets are free.  There is a natural homotopy equivalences $$\mathbb{Z}\times BGL(A)^+ \longrightarrow |\mathcal{S}^-\mathcal{S}|$$ where the second space is weakly homotopy equivalent to $ \Omega |QVec(A)|.$
\end{theorem}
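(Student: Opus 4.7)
The statement bundles two assertions: a homotopy equivalence $\mathbb{Z} \times BGL(A)^+ \simeq |\mathcal{S}^{-1}\mathcal{S}|$ and a weak equivalence $|\mathcal{S}^{-1}\mathcal{S}| \simeq \Omega|QVec(A)|$. I would treat them separately, in each case verifying that the classical argument from the ring case (as presented in Srinivas) transports through, since the preceding sections provide exactly the categorical inputs the classical proof requires.

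For the first equivalence the plan is to apply Quillen's group completion theorem to the topological monoid $|\mathcal{S}|$. The rank lemma just established identifies $|\mathcal{S}| \simeq \coprod_{n \geq 0} BGL_n(A)$ as spaces, with the wedge-sum symmetric monoidal structure making this into a homotopy commutative $H$-space whose $\pi_0$ is the free commutative monoid $\mathbb{N}$ on the generator $[A]$. The group completion theorem identifies $H_*(|\mathcal{S}^{-1}\mathcal{S}|)$ with the localization of $H_*(|\mathcal{S}|)$ at the translation action of $[A]$, whose colimit along the stabilization maps $BGL_n(A) \to BGL_{n+1}(A)$ is $H_*(BGL(A))$ on each component. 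Because $E(A)$ is a normal perfect subgroup of $GL(A)$ by Proposition \ref{prop: E(A)IsPerfect}, the $+$-construction $BGL(A)^+$ is defined, and the natural map $BGL(A) \to |\mathcal{S}^{-1}\mathcal{S}|_0$ to the base component kills $E(A)$ on $\pi_1$; its universal property then produces a comparison map $BGL(A)^+ \to |\mathcal{S}^{-1}\mathcal{S}|_0$ which is a homology isomorphism between simple spaces, hence a weak equivalence by Whitehead. Extending over all $\mathbb{Z}$-many components yields the first equivalence.

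For the second equivalence I would invoke Grayson's splitting-exact comparison theorem. Proposition \ref{prop: ShortExactSequencesAreSplitting}, combined with the running hypothesis $Proj(A) = Vec(A)$, says that \emph{every} admissible short exact sequence of finitely generated free $A$-sets is splitting exact. In this situation Grayson's argument (the version in Srinivas, Chapter 7) realizes $\Omega|QVec(A)|$ as the geometric realization of $\mathcal{S}^{-1}\mathcal{S}$ by applying Quillen's Theorem A to a fibered functor out of $QVec(A)$ whose fibres are computable precisely because every extension splits. The proof there uses only the quasi-exact axioms verified in Proposition \ref{prop:PropertiesOfAdmissibleMonosAndEpis} together with splitting-exactness, so no new input is required.

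The main obstacle, I expect, is not conceptual but rather the bookkeeping needed to confirm that the group-completion bar constructions and the Theorem A arguments are genuinely insensitive to the passage from abelian direct sums to wedge sums of pointed $A$-sets. Concretely one must check that the translation maps $[A]\vee(-)$ act by cofibrations on $|\mathcal{S}|$, that the resulting telescope computes $H_*(BGL(A))$ on each component of the localization, and that the base-change pullbacks invoked in the Theorem A step are modeled correctly by the fibre-product construction of Proposition \ref{prop:PropertiesOfAdmissibleMonosAndEpis}(3). Once these verifications are made, the standard proofs go through verbatim.
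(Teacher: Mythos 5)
Your proposal is correct and follows essentially the same route as the paper: the paper's proof consists precisely of observing that, since $Vec(A)$ is splitting exact by Proposition \ref{prop: ShortExactSequencesAreSplitting} (with $Proj(A)=Vec(A)$), the classical arguments of Grayson and Srinivas --- group completion for the first equivalence and the split-exact $\mathcal{S}^{-}\mathcal{S}\simeq\Omega|Q|$ comparison for the second --- apply without modification. Your outline simply makes explicit the ingredients (the rank lemma, perfection of $E(A)$, and Proposition \ref{prop:PropertiesOfAdmissibleMonosAndEpis}) that the paper leaves to the cited references.
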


The following fact is well known. We restate it using the language of K-theory of monoids and outline a proof for the reader's convenience.
\begin{corollary} $K_i(A)\cong \pi_i^s(BG_+)$ where $G$ is the group of invertible elements in $A$ and the subscript $+$ means a disjoint base point. In particular, $\pi_2^s(BG_+)$ is the kernel of the universal central extension of $E(A)$.
\end{corollary}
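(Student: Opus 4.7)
The plan is to combine the $Q{=}{+}$ theorem just established with the classical Barratt--Priddy--Quillen(--Segal) theorem, which identifies the stable homotopy of a classifying space in terms of wreath products.

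First, recall from the discussion preceding Proposition~\ref{prop: E(A)IsPerfect} that every element of $GL_n(A)$ factors uniquely as $D(a_1,\ldots,a_n)\sigma$ with $a_i \in G := A^\times$ and $\sigma \in \Sigma_n$. This identifies $GL_n(A) \cong G\wr\Sigma_n = G^n\rtimes\Sigma_n$, compatibly with the stabilization inclusions $GL_n(A)\hookrightarrow GL_{n+1}(A)$, so $GL(A) \cong G\wr\Sigma_\infty$. Consequently the symmetric monoidal groupoid $\mathcal{S}=\mathrm{Iso}(Vec(A))$ of the previous theorem satisfies
\[
|\mathcal{S}| \;\simeq\; \coprod_{n\geq 0} BGL_n(A) \;\simeq\; \coprod_{n\geq 0} B(G\wr\Sigma_n),
\]
which is precisely the classifying space of the free symmetric monoidal groupoid on $BG$ (with a disjoint basepoint).

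Second, the Barratt--Priddy--Quillen theorem identifies the group completion of this symmetric monoidal groupoid with $\Omega^\infty\Sigma^\infty BG_+$. Splicing this with the $Q{=}{+}$ theorem yields a chain of weak equivalences
\[
\mathbb{Z}\times BGL(A)^+ \;\simeq\; |\mathcal{S}^{-1}\mathcal{S}| \;\simeq\; \Omega^\infty\Sigma^\infty BG_+,
\]
and taking $\pi_i$ for $i\geq 1$ gives $K_i(A)\cong \pi_i^s(BG_+)$. The ``in particular'' assertion follows by specializing to $i=2$ and invoking Corollary~\ref{corr:K2IsTheKernel}, which says $K_2(A)$ is the kernel of the universal central extension of $E(A)$.

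The one delicate point is the compatibility of $+$-constructions. Barratt--Priddy--Quillen implicitly uses the $+$-construction killing the commutator subgroup of $\pi_1$ on each component of $|\mathcal{S}|$, whereas we have defined $BGL(A)^+$ using the perfect normal subgroup $E(A)\subset GL(A)$. But Proposition~\ref{prop: E(A)IsPerfect}(1) identifies $E(A)$ with $[GL(A),GL(A)]$, so the two choices of perfect subgroup coincide and the two $+$-constructions produce the same space. Everything else is formal.
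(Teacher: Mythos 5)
Your argument is correct and is essentially the paper's: the paper also identifies $|\mathcal{S}^-\mathcal{S}|$ with $\Omega^\infty\Sigma^\infty BG_+$ (citing Segal's machinery, i.e.\ both are group completions of the topological monoid $|\mathcal{S}|$) and splices this with the $Q{=}{+}$ theorem, exactly as you do via the wreath-product description $GL_n(A)\cong G\wr\Sigma_n$ and Barratt--Priddy--Quillen. Your explicit remarks on the wreath-product decomposition and on $E(A)=[GL(A),GL(A)]$ reconciling the two $+$-constructions are details the paper leaves implicit, but the route is the same.
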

\begin{proof} Using Segal's machinery \cite{Segal}, one proves that $|\mathcal{S}^-\mathcal{S}|$ is weakly homotopy equivalent to $\Omega^\infty\Sigma^\infty BG_+$ because both are the homotopy theoretic group completion of the topological monoid $|\mathcal{S}|$. So we have
$$K_i(A)=\pi_i(\Omega |QVec(A)|)\cong \pi_i(|\mathcal{S}^-\mathcal{S}|)\cong \pi_i \Omega^\infty\Sigma^\infty BG_+ =\pi_i^s (BG_+).$$
\end{proof}

One of the main results of this paper is to show that algebraic K-theory of $\mathbb{F}_1$-schemes are homotopy invariant. We recall from \cite{COR} the definition of algebraic K-theory of $\mathbb{F}_1$-schemes. In particular, we recall that $K_i(Spec(A))$ coincides with $K_i(A)$ as we defined above, where $A$ is an abelian monoid and $Spec(A)$ is the spectrum of $A$ which is an affine $\mathbb{F}_1$-scheme. We also recall from \cite{CC09} that the affine line over $\mathbb{F}_1$, denoted as $\mathbb{A}^1_{\mathbb{F}_1}$, is defined as $Spec(\mathbb{F}_1[x])$ where $\mathbb{F}_1[x]$ is the monoid $\{-\infty, 0, 1, 2, \cdots \}$ under the obvious addition rules. The product scheme $Spec(A)\times \mathbb{A}^1_{\Fun}$ can be shown as $Spec(A[x])$ where $A[x]$, also denoted as $A\underset{\Fun}\otimes \Fun[x]$,  is the following quotient monoid
$$A[x]=A\wedge \mathbb{F}_1[x]=\frac{A \times \mathbb{F}_1[x]}{(A\times \{-\infty\})\cup (\{0_A\} \times \Fun[x])}. $$
\begin{theorem}\label{thm:IsoOnInvertibleElementsInducesIsoOnKGroups} Let $A$ and $B$ be monoids over which projective sets are free. Let $f: A \to B$ be a morphism of monoids which induces isomorphisms on the groups of invertible elements $f: A^\times \to B^\times.$ Then the induced map on K-groups are isomorphisms.
\end{theorem}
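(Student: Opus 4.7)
The plan is to observe that this theorem follows readily from Proposition \ref{prop:UnitsOfMonoidsCompletelyDeterminsKGroups} for the $K_i$ with $i\geq 1$, together with Proposition \ref{prop:ComputationsOnK0} for $K_0$, with only a small amount of additional bookkeeping to identify the induced map.

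First I would note that any monoid morphism sends units to units, so $f$ restricts to a group homomorphism $f^{\times}\colon A^\times \to B^\times$, which by hypothesis is an isomorphism. For the higher $K$-groups, I would invoke the explicit normal form for row-monomial invertible matrices from Section \ref{InvertibleMatricesOverMonoids}: every element of $GL_n(A)$ has the form $D(a_1,\ldots,a_n)\sigma$ with $a_i\in A^\times$ and $\sigma$ a permutation matrix. Applying $f$ entry-wise and using $f(D(a_1,\ldots,a_n)\sigma) = D(f(a_1),\ldots,f(a_n))\sigma$, one sees that the induced map $GL_n(A)\to GL_n(B)$ is a group isomorphism for every $n$; passing to the colimit yields $GL(A)\cong GL(B)$. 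By functoriality of the $+$-construction this produces a homotopy equivalence $BGL(A)^+ \simeq BGL(B)^+$, whence $f_\ast\colon K_i(A)\to K_i(B)$ is an isomorphism for all $i\geq 1$.

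For $i=0$, Proposition \ref{prop:ComputationsOnK0} gives $K_0(A)\cong \mathbb{Z}\cong K_0(B)$ via the rank function. The map $f_\ast$ is induced by the base-change functor $B\wedge_A -\colon Vec(A)\to Vec(B)$, which sends the rank-$n$ free $A$-set $\bigvee_{k=1}^n A$ to the rank-$n$ free $B$-set $\bigvee_{k=1}^n B$ and therefore corresponds to the identity on $\mathbb{Z}$.

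The only point requiring even minor verification is that $B\wedge_A -$ actually induces a morphism on $K$-theory, i.e.\ that it carries admissible short exact sequences in $Proj(A)$ to admissible short exact sequences in $Proj(B)$. Since every such sequence is split by Proposition \ref{prop: ShortExactSequencesAreSplitting} and base change preserves wedge sums, this is immediate. Thus the theorem is essentially a corollary of results already established, and the argument presents no substantive obstacle; the only mildly nontrivial observation is that the description of $GL_n$ as generalized permutation matrices is what lets us pass from units to the full general linear group.
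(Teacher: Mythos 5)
Your proposal is correct and follows essentially the same route as the paper: the paper also handles $i\geq 1$ via the $+$-construction using $GL(A)\cong GL(B)$ (its Proposition on units determining the K-groups) and handles $K_0$ via the rank isomorphism $K_0\cong\mathbb{Z}$. Your extra details (the normal form $D(a_1,\ldots,a_n)\sigma$ and the check that base change preserves split admissible sequences) just spell out steps the paper leaves implicit.
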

\begin{proof} By Proposition \ref{prop:ComputationsOnK0} we have $K_0(A)\cong \mathbb{Z}\cong K_0(B)$ given by the rank of free sets. For $i\geq 1$, the isomorphism $K_i(A)\cong K_i(B)$ follows easily from the +-construction because $GL(A)\cong GL(B)$.
\end{proof}

\begin{corollary} Algebraic K-theory is homotopy invariant on the affine $\Fun$-schemes $\Spec(A)$, where $A$ is a monoid such that projective $A$-sets are free.
\end{corollary}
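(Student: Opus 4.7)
The plan is to deduce the corollary from Theorem \ref{thm:IsoOnInvertibleElementsInducesIsoOnKGroups} applied to the natural inclusion $\iota \colon A \hookrightarrow A[x]$ sending $a$ to the class of $(a,0)$. Since $\Spec(A)\times \A^1_{\Fun}=\Spec(A[x])$, homotopy invariance is equivalent to the statement $K_i(A)\cong K_i(A[x])$, so by Theorem \ref{thm:IsoOnInvertibleElementsInducesIsoOnKGroups} it suffices to verify two things: that $\iota$ induces an isomorphism $A^\times \cong A[x]^\times$, and that projective $A[x]$-sets are free.

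First I would dispatch the units computation. An element of $A[x]=A\wedge \Fun[x]$ is either the basepoint or has a unique representative $(a,n)$ with $a\in A\setminus\{0\}$ and $n\in\{0,1,2,\dots\}$, with multiplication $(a,n)\cdot(b,m)=(ab,n+m)$ (collapsing to the basepoint if $ab=0$). An inverse of $(a,n)$ must satisfy both $ab=1$ in $A$ and $n+m=0$ in the additive monoid $\{0,1,2,\dots\}$; the latter forces $n=m=0$. Hence $A[x]^\times=A^\times\times\{0\}$, and $\iota$ is the identity on units.

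Next I would verify that projective $A[x]$-sets are free. By the idempotent characterization of projectives recalled in Section \ref{ModulesOverMonoids}, any projective $A[x]$-set $P$ has the form $\bigvee_i \varepsilon_i A[x]$ for idempotents $\varepsilon_i\in A[x]$. A direct computation shows that $(a,n)^2=(a,n)$ forces $2n=n$ in $\Fun[x]$, hence $n=0$, and $a^2=a$ in $A$; thus every idempotent in $A[x]$ has the form $(e,0)$ with $e\in A$ idempotent. Since smashing with $\Fun[x]$ commutes with the wedge and the right-multiplication action of $A[x]$ on $(e,0)\cdot A[x]$ identifies this $A[x]$-set with $(eA)\wedge \Fun[x]$, we get
\[
P\;\cong\;\bigvee_i \bigl((e_iA)\wedge \Fun[x]\bigr).
\]
Each $e_iA$ is a projective $A$-set, hence free by hypothesis, say $e_iA\cong \bigvee_{j\in J_i} A$; smashing through with $\Fun[x]$ yields $(e_iA)\wedge \Fun[x]\cong \bigvee_{j\in J_i} A[x]$. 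Therefore $P$ is free as an $A[x]$-set.

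With both hypotheses of Theorem \ref{thm:IsoOnInvertibleElementsInducesIsoOnKGroups} verified, $\iota$ induces isomorphisms $K_i(A)\cong K_i(A[x])$ for all $i\geq 0$, which is the desired homotopy invariance. The principal obstacle is the idempotent analysis that opens the second step: one must be sure that no ``exotic'' idempotents are introduced by the smash construction. This is ultimately a short calculation in $\Fun[x]$, but it is the crux of the argument, since without it one cannot invoke Theorem \ref{thm:IsoOnInvertibleElementsInducesIsoOnKGroups} at the level of $A[x]$.
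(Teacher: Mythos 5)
Your argument is correct and is essentially the paper's own proof: both apply Theorem \ref{thm:IsoOnInvertibleElementsInducesIsoOnKGroups} to the natural map $A\to A[x]$, checking that it is an isomorphism on units and that every idempotent of $A[x]$ comes from an idempotent of $A$ (which is what guarantees projective $A[x]$-sets are free); you merely spell out the latter deduction, which the paper leaves implicit. The only cosmetic difference is notational: the paper records the idempotent condition as the exponent being the identity of $\Fun[x]$ (written ``$n=1$''), which in your additive notation is $n=0$.
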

\begin{proof} This statement follows from Theorem \ref{thm:IsoOnInvertibleElementsInducesIsoOnKGroups}. The natural map $A\to A[x]$ induces isomorphism on the groups of invertible elements and the set of idempotents. In fact, an element $(a, n)\in A[x]$ is invertible if and only if $a$ is invertible in $A$ and $n=0$.  $(a, n)$ is a nonzero idempotent if and only if $a$ is a nonzero idempotent in $A$ and $n=1$.
\end{proof}

\section{Universal central extensions of $E(G_*)$}\label{UniversalCentralExtension}
In this section, we let $G$ denote a finitely generated abelian group. Recall from Example \ref{example:Monoids} that $G_*$ is the monoid $\{*\}\cup A$ where $*$  is the  zero element in $G_*$. It follows from definition that there is a splitting exact sequence of groups
$$\xymatrix{1\ar[r] & \bigoplus\limits_{i=2}^\infty G \ar[r]^u &  E(G_*) \ar[r]^-t & A_\infty \ar@/^.5pc/[l]^-s \ar[r]& 1 }$$ where
$A_\infty=E(\Fun)$ is the union of all finite even permutations, $u((g_i)_{i=2}^\infty)=Diag((g_2g_3\cdots)^{-1}, g_2, g_3, \cdots)$ and $t(\sigma D)=\sigma.$ So $E(G_*)$ is a semidirect product $ \bigoplus\limits_{i=2}^\infty G \rtimes A_\infty$. We denote the associated $A_\infty$-action on $\bigoplus\limits_{i=2}^\infty G$ by $$\lambda: A_\infty\longrightarrow Aut(\bigoplus\limits_{i=2}^\infty G). $$

Since $E(G_*)$ is perfect,  the universal central extension of $E(G_*)$ exists  which we denote by $St(G_*)$. The kernel of $St(G_*)\to E(G_*)$ is the group $K_2(G_*)$ which is also the group $\pi_2^s(BG_+)$. In this section, we want to give an explicit construction of the group $St(G_*)$ where $G$ is cyclic.

Let $St$ denote $St(\Fun)$. The universal central extension of $E(\Fun)=A_\infty$ is well known as
$$ 1 \longrightarrow \Z/2 \longrightarrow St \longrightarrow A_\infty \longrightarrow 1. $$ In particular, $K_2(\Fun)\cong \Z/2$ which is consistent with the fact that $\pi_2^s(S^0)=\Z/2$ where $S^0$ is the point together with a disjoint base point. We also recall from \cite[Section 4.2]{Hatcher} that $\pi_1^s(S^0)=\Z/2$ and $\pi_3^s(S^0)=\Z/24$.

$St$ acts on $\bigoplus\limits_{i=2}^\infty G$ via the action $\lambda \circ (St\to A_{\infty})$, so we have a semidirect product $\bigoplus\limits_{i=2}^\infty G\rtimes St$. One checks that the obvious map
 $$\bigoplus\limits_{i=2}^\infty G\rtimes St \longrightarrow \bigoplus\limits_{i=2}^\infty G\rtimes A_\infty =E(G_*)$$
 is a central extension. So we have a morphism from $St(G_*)$
to $\bigoplus\limits_{i=2}^\infty G\rtimes St$ and hence a morphism
$$\tau: St(G_*)\longrightarrow St.$$ Let $\pi: St(G_*)\to E(G_*)$ be the projection. Since $A_\infty$ is a subgroup of $E(G_*)$, the inverse image $\pi^{-1}(A_\infty)$ is defined. The projection from $\pi^{-1}(A_\infty)$ to $A_\infty$ is a central extension. So we get a morphism from $St$ to $\pi^{-1}(A_\infty)$ and hence a morphism
$$\beta: St\longrightarrow St(G_*).$$

\begin{lemma}\label{lemma:StSplitsSt(G)} The composition $\tau\circ\beta$ is the identity morphism of $St.$
\end{lemma}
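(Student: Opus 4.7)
The plan is to exploit the defining universal property of $St$: any central extension of $A_\infty$ receives a \emph{unique} homomorphism from $St$ lying over a prescribed map on $A_\infty$. In particular, endomorphisms of $St$ that cover $\mathrm{id}_{A_\infty}$ are unique. Since $\mathrm{id}_{St}$ is one such endomorphism, it suffices to show that $\tau\circ\beta$ also covers $\mathrm{id}_{A_\infty}$, i.e.\ that $p\circ(\tau\circ\beta)=p$, where $p\colon St\to A_\infty$ denotes the universal central extension.

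First I would unpack the construction of the two maps. Write $\pi\colon St(G_*)\to E(G_*)$ for the canonical projection, $q\colon \bigoplus_{i\ge 2} G\rtimes St\to E(G_*)$ for the map induced by $p$, $\alpha\colon St(G_*)\to \bigoplus_{i\ge 2} G\rtimes St$ for the unique lift so that $\tau=\mathrm{pr}_{St}\circ\alpha$, and $\gamma\colon St\to\pi^{-1}(A_\infty)$ for the unique lift so that $\beta=\iota\circ\gamma$, where $\iota\colon\pi^{-1}(A_\infty)\hookrightarrow St(G_*)$ is the inclusion. The point is that these constructions force two obvious squares to commute: $\pi\circ\beta=\iota_{A_\infty}\circ p$ (because $\gamma$ lifts $p$ through $\pi|_{\pi^{-1}(A_\infty)}$), and $\mathrm{pr}_{A_\infty}\circ q=p\circ\mathrm{pr}_{St}$ (because projection onto the second factor of a semidirect product is a homomorphism).

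From these two identities the chain
\[
p\circ\tau\circ\beta \;=\; p\circ\mathrm{pr}_{St}\circ\alpha\circ\beta \;=\; \mathrm{pr}_{A_\infty}\circ q\circ\alpha\circ\beta \;=\; \mathrm{pr}_{A_\infty}\circ\pi\circ\beta \;=\; \mathrm{pr}_{A_\infty}\circ\iota_{A_\infty}\circ p \;=\; p
\]
would follow formally, using $q\circ\alpha=\pi$ and $\mathrm{pr}_{A_\infty}\circ\iota_{A_\infty}=\mathrm{id}_{A_\infty}$. With $p\circ(\tau\circ\beta)=p$ established, the universal property applied to the central extension $St\to A_\infty$ (viewed as a recipient) forces $\tau\circ\beta=\mathrm{id}_{St}$, since $\mathrm{id}_{St}$ and $\tau\circ\beta$ are two homomorphisms from the universal central extension $St$ to the central extension $St$ that cover the same map $\mathrm{id}_{A_\infty}$.

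The main thing to check carefully, and the only place where the semidirect-product structure really matters, is that the projection $\mathrm{pr}_{St}\colon \bigoplus_{i\ge 2}G\rtimes St\to St$ is a group homomorphism and that the action $\lambda\circ p$ of $St$ on $\bigoplus_{i\ge 2}G$ makes $\bigoplus_{i\ge 2}G\rtimes St\to E(G_*)$ a genuine central extension; granted this, everything above is bookkeeping. There is no real obstacle — the argument is a standard application of the uniqueness half of the universal-central-extension property, and the concrete content is encoded entirely in the two commuting squares above.
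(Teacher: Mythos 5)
Your argument is correct and is exactly the paper's proof, made explicit: you verify that $\tau\circ\beta$ covers the identity of $A_\infty$ (the paper's ``compatible with the projection $St\to A_\infty$'') and then invoke the uniqueness clause of the universal central extension property to conclude $\tau\circ\beta=\id_{St}$. The only difference is that you spell out the commuting squares the paper leaves implicit.
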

\begin{proof} It follows from the definition of $\tau$ and $\beta$ that $\tau\circ\beta$ is compatible with the projection $St\to A_\infty$. So it is the identity map.
\end{proof}

It follows from Lemma \ref{lemma:StSplitsSt(G)} that $St(G)$ is a semidirect product of $St$ with a normal group $N(G)=\ker(\tau)$. Moreover, there is a commutative diagram of splitting extensions
$$\xymatrix{
1 \ar[r] & N(G) \ar[r] \ar[d]^-\pi                           & St(G_*)\ar[r]^-{\tau} \ar[d]^-{\pi} & St \ar[r]\ar[d] \ar@/_-.5pc/[l]^-\beta      & 1  \\
1 \ar[r] & \bigoplus\limits_{i=2}^\infty G \ar[r]   & E(G_*)\ar[r]^t        & A_\infty \ar[r]  \ar@/_-.5pc/[l]^-s     & 1
}$$
We also denote the left vertical map by $\pi$.

To proceed further, we need to recall the Atiyah-Hirzebruch spectral sequence for stable homotopy groups of a CW-complex $X$
$$E^2_{p,q}=H_p(X, \pi_q^s(S^0))\Longrightarrow \pi_{p+q}^s(X_+).$$
The differentials arriving at the column $p=0$ are all trivial because the natural maps $ S^0\to X_+ \to S^0$ imply that $\pi^S_*(S^0)$ is a
 direct summand of $\pi^S_*(X_+).$ Let $\pi^S_*(X)$ denote the natural quotient $\pi^S_*(X_+)/\pi^S_*(S^0)$ and  let $d$ denote the differential $$d: \, E^2_{3,0}=H_3(G, \Z)\longrightarrow E^2_{1,1}=H_1(G, \Z/2).$$
The following observation is immediate from the spectral sequence and the above discussion.

\begin{lemma}\label{lemma:FromTheSpectralSequence} $\pi^S_*(X_+)=\pi^S_*(S^0)\oplus \pi^S_*(X)$ and there is a natural exact sequence
$$0\longrightarrow Coker(d)\longrightarrow \pi^S_2(X)\longrightarrow H_2(X, \mathbb{Z})\longrightarrow 0.$$
In particular, if  $X=BG$ where $G$ is cyclic, then $K_2(G_*)=\pi_2^S(BG_+)=\Z/2\oplus Coker(d)$ and $Coker(d)$ is a quotient of  $\Z/2$. \end{lemma}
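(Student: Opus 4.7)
The plan is to run the Atiyah--Hirzebruch spectral sequence (AHSS) for the suspension spectrum $\Sigma^\infty X_+$ and read off the degree-$2$ filtration. First I would promote the retraction $S^0 \to X_+ \to S^0$ to a stable wedge splitting $\Sigma^\infty X_+ \simeq \Sigma^\infty X \vee S^0$; this immediately yields the direct sum decomposition $\pi^s_*(X_+) = \pi^s_*(S^0) \oplus \pi^s_*(X)$, and by naturality of the AHSS it allows me to work with the reduced spectral sequence for $\Sigma^\infty X$, in which $E^2_{p,q} = H_p(X, \pi^s_q(S^0))$ for $p \geq 1$ and vanishes for $p = 0$. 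In particular every differential landing in (or originating from) the $p=0$ column is automatically zero.

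Next I would enumerate the bidegrees on the line $p + q = 2$ with $p \geq 1$: only $(1,1)$ with $E^2_{1,1} = H_1(X, \mathbb{Z}/2)$ and $(2,0)$ with $E^2_{2,0} = H_2(X, \mathbb{Z})$ contribute. The only possibly nonzero differential into $(1,1)$ is the $d_2$ from $(3,0)$, which is exactly the map $d$; differentials out of $(1,1)$ land in $p \leq -1$ and vanish. At $(2,0)$ the only potentially interesting differential is $d_2 \colon H_2(X,\mathbb{Z}) \to H_0(X,\mathbb{Z}/2)$, whose target vanishes in the reduced theory. Hence $E^\infty_{1,1} = \mathrm{Coker}(d)$ and $E^\infty_{2,0} = H_2(X,\mathbb{Z})$, and the two-step filtration on $\pi^s_2(X)$ gives the advertised short exact sequence
\[ 0 \longrightarrow \mathrm{Coker}(d) \longrightarrow \pi^s_2(X) \longrightarrow H_2(X,\mathbb{Z}) \longrightarrow 0. \]

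For the cyclic specialization $X = BG$, I would invoke the standard facts $H_2(BG, \mathbb{Z}) = 0$ and $H_1(BG, \mathbb{Z}/2) = G \otimes \mathbb{Z}/2$, which is $0$ or $\mathbb{Z}/2$. The sequence then collapses to $\pi^s_2(BG) \cong \mathrm{Coker}(d)$, automatically a quotient of $\mathbb{Z}/2$, with no extension problem to resolve. Combining with the global splitting, $\pi^s_2(S^0) = \mathbb{Z}/2$, and the identification $K_2(G_*) = \pi^s_2(BG_+)$ established earlier, I obtain $K_2(G_*) \cong \mathbb{Z}/2 \oplus \mathrm{Coker}(d)$.

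The only real obstacle is bookkeeping: one must justify that the wedge splitting of spectra descends to a splitting of the filtered AHSS so that all differentials touching the $p = 0$ column die, and for the cyclic case verify that no hidden extension can appear (which it cannot, since one of the two associated graded pieces vanishes). Beyond the AHSS and elementary group homology of cyclic groups, no deeper input is required.
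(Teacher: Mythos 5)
Your argument is correct and is essentially the paper's own: the paper also deduces the splitting $\pi^S_*(X_+)=\pi^S_*(S^0)\oplus\pi^S_*(X)$ from the retraction $S^0\to X_+\to S^0$, kills the differentials touching the $p=0$ column by that splitting, and reads the short exact sequence off the two-step filtration of $\pi^S_2$ in the Atiyah--Hirzebruch spectral sequence, with the cyclic case following from $H_2(G,\Z)=0$ and $H_1(G,\Z/2)$ being $0$ or $\Z/2$. Your use of the reduced spectral sequence for $\Sigma^\infty X$ (which tacitly uses that $X$ is connected, as it is here) is just a cleaner bookkeeping of the same idea.
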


\begin{lemma}\label{lemma:KernelAndCokernelOfPi} Let $G$ be a cyclic group. The morphism $$\pi: N(G)\longrightarrow \bigoplus\limits_{i=2}^\infty G$$ is onto and its  kernel equals $Coker(d)$.
\end{lemma}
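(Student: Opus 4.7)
The plan is to chase the commutative diagram of split extensions displayed just above the statement and to exploit Lemma 4.3 to identify the kernel. For surjectivity, I would pick $x\in\bigoplus_{i=2}^\infty G$ and lift it arbitrarily to some $\tilde x\in St(G_*)$ via $\pi$. The commutativity $t\circ\pi=(\mathrm{proj})\circ\tau$ together with $t(x)=1$ forces $\tau(\tilde x)$ to lie in $\ker\bigl(St\to A_\infty\bigr)=K_2(\Fun)\cong\Z/2$. Replacing $\tilde x$ by $\tilde x\cdot\beta\bigl(\tau(\tilde x)\bigr)^{-1}$ produces an element of $\ker(\tau)=N(G)$, using the identity $\tau\circ\beta=\mathrm{id}_{St}$ from Lemma 4.4, while leaving the image under $\pi$ unchanged, since $\pi\circ\beta$ factors through the projection $St\to A_\infty$ which annihilates the element $\tau(\tilde x)\in K_2(\Fun)$.

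For the kernel, I identify
\[
\ker\bigl(\pi|_{N(G)}\bigr) \;=\; N(G)\cap\ker(\pi) \;=\; \ker(\tau)\cap K_2(G_*),
\]
which is precisely the kernel of the restricted map $\tau\colon K_2(G_*)\to K_2(\Fun)$. By Lemma 4.3, $K_2(G_*)\cong\Z/2\oplus\mathrm{Coker}(d)$, where the $\Z/2$ summand is the image of the map $K_2(\Fun)\to K_2(G_*)$ induced by functoriality of K-theory under the monoid inclusion $\Fun\hookrightarrow G_*$. The restriction of $\beta$ to $K_2$ is a section of $\tau|_{K_2(G_*)}$, so this restricted map is split surjective, and its kernel must therefore be the complementary summand $\mathrm{Coker}(d)$.

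The step I expect to be most delicate is verifying that the section $K_2(\Fun)\hookrightarrow K_2(G_*)$ arising from $\beta|_{K_2}$ coincides with the splitting implicit in Lemma 4.3, namely the one induced on $K_2$ by the monoid map $\Fun\hookrightarrow G_*$ (equivalently, at the topological level, by the inclusion of the basepoint $S^0\to BG_+$). Both arise by naturality from $\Fun\hookrightarrow G_*$: the first through the universal property of universal central extensions applied to the pullback extension $\pi^{-1}(A_\infty)\to A_\infty$, and the second through the $Q=+$ identification of the previous section and the induced map $BGL(\Fun)^+\to BGL(G_*)^+$. Once this compatibility is pinned down, the direct-summand decomposition of Lemma 4.3 combined with the split surjectivity of $\tau|_{K_2(G_*)}$ immediately yields both claims of the lemma.
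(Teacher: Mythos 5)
Your proof is correct and follows the same route the paper intends: surjectivity by the diagram chase using $\beta(St)\subset\pi^{-1}(A_\infty)$ and $\tau\circ\beta=\id$, and the kernel via the identification $\ker\bigl(\pi|_{N(G)}\bigr)=\ker\bigl(\tau|_{K_2(G_*)}\bigr)$ together with Lemma \ref{lemma:FromTheSpectralSequence}. The compatibility you single out as delicate is in fact not needed: Lemma \ref{lemma:FromTheSpectralSequence} also says that for cyclic $G$ the group $Coker(d)$ is a quotient of $\Z/2$, so $K_2(G_*)\cong \Z/2\oplus Coker(d)$ is elementary abelian of order at most $4$, and hence the kernel of \emph{any} surjection of $K_2(G_*)$ onto $K_2(\Fun)\cong\Z/2$ --- in particular of $\tau|_{K_2(G_*)}$, which is surjective because $\beta$ splits it --- is automatically isomorphic to $Coker(d)$, regardless of whether the splitting coming from $\beta$ agrees with the topological one; this abstract isomorphism is all the lemma and its later uses require. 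So you can delete the final paragraph and the argument is already complete.
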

\begin{proof} The fact that $\pi: N(G)\to \bigoplus\limits_{i=2}^\infty G$ is onto can be checked by diagram chasing on the diagram after Lemma \ref{lemma:StSplitsSt(G)}, using the fact that $\beta(St)\subset \pi^{-1}(A_\infty)$. The second statement follows from Lemma \ref{lemma:FromTheSpectralSequence}.
\end{proof}

\begin{proposition}\label{prop:UCEforZmodOddNumber} Let $G=\Z/d$ where $d$ is an odd integer, then $K_2(G_*)$ is naturally isomorphic to $K_2(\Fun)=\Z/2$ and $St(G_*)\cong \bigoplus\limits_{i=2}^\infty G \rtimes St$. The following group extension is the universal central extension of $E(G_*)$.
$$\xymatrix{
1 \ar[r] & \Z/2  \ar[r]   & \bigoplus\limits_{i=2}^\infty G \rtimes St \ar[r]  & E(G_*) \ar[r] & 1
}$$
\end{proposition}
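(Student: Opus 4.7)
The plan is to extract the proposition as a direct corollary of the two preceding lemmas, once I compute $Coker(d)$ for $G=\Z/d$ with $d$ odd.

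First I would observe that for $G=\Z/d$ with $d$ odd, the target $H_1(G,\Z/2)\cong G\otimes\Z/2$ of the differential $d$ already vanishes, since $\Z/d\otimes\Z/2=0$ when $d$ is odd (and the Tor term from $H_0(G,\Z)=\Z$ vanishes as well). Thus $Coker(d)=0$ for trivial degree reasons. Feeding this into Lemma \ref{lemma:FromTheSpectralSequence} yields $K_2(G_*)=\Z/2\oplus 0=\Z/2$, which is naturally identified with $K_2(\Fun)$ via the map induced by the inclusion $\Fun\hookrightarrow G_*$. This establishes the first assertion.

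Next I would apply Lemma \ref{lemma:KernelAndCokernelOfPi}, which states that $\pi\colon N(G)\to\bigoplus_{i=2}^\infty G$ is surjective with kernel $Coker(d)$. The vanishing above then forces $\pi$ to be an isomorphism. Combined with the splitting $\beta\colon St\to St(G_*)$ of Lemma \ref{lemma:StSplitsSt(G)}, the exact sequence $1\to N(G)\to St(G_*)\to St\to 1$ splits, giving $St(G_*)\cong N(G)\rtimes St\cong\bigoplus_{i=2}^\infty G\rtimes St$. The action of $St$ on $\bigoplus_{i=2}^\infty G$ is the pullback of $\lambda$ along $St\to A_\infty$, as read off from the commutative diagram following Lemma \ref{lemma:StSplitsSt(G)}; this is the second assertion.

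Finally, to obtain the displayed central extension I would trace through the construction: the kernel of the projection $St(G_*)\to E(G_*)$ is by definition $K_2(G_*)=\Z/2$, and under the isomorphism $St(G_*)\cong\bigoplus_{i=2}^\infty G\rtimes St$ this $\Z/2$ is precisely the central kernel of the extension $1\to\Z/2\to St\to A_\infty\to 1$, which sits inside the $St$-factor. Centrality in the semidirect product follows from centrality in $St$ together with the fact that $A_\infty$ acts trivially on this $\Z/2$. The main ``obstacle'' here is essentially organizational: every serious input is already encapsulated in Lemmas \ref{lemma:FromTheSpectralSequence}, \ref{lemma:KernelAndCokernelOfPi}, and \ref{lemma:StSplitsSt(G)}, and the only nontrivial computational ingredient is the elementary identity $\Z/d\otimes\Z/2=0$ for $d$ odd.
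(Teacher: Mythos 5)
Your proposal is correct and follows essentially the same route as the paper: the vanishing $H_1(\Z/d,\Z/2)=0$ for odd $d$ gives $Coker(d)=0$, whence $K_2(G_*)=\Z/2$ by Lemma \ref{lemma:FromTheSpectralSequence}, $N(G)\cong\bigoplus_{i=2}^\infty G$ by Lemma \ref{lemma:KernelAndCokernelOfPi}, and $St(G_*)\cong\bigoplus_{i=2}^\infty G\rtimes St$ by Lemma \ref{lemma:StSplitsSt(G)}. Your extra remarks on naturality and on locating the central $\Z/2$ inside the $St$-factor are fine elaborations of what the paper leaves implicit.
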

\begin{proof} If $d$ is odd, $H_1(G, \Z/2)=0$ and hence $Coker(d)=0$. This proves that $K_2(G_*)=\Z/2$. Together with Lemma \ref{lemma:KernelAndCokernelOfPi}, this also proves  that $N(G)\cong \bigoplus\limits_{i=2}^\infty G$. So $St(G_*)\cong \big(\bigoplus\limits_{i=2}^\infty G\big) \rtimes St$ by Lemma \ref{lemma:StSplitsSt(G)}. In particular, the universal central extension of
$E(G_*)$ is given by the desired sequence. \end{proof}

\begin{remark}
From the commutative diagram after Lemma \ref{lemma:StSplitsSt(G)}, it is easy to check that the action of $St$ on $\bigoplus\limits_{i=2}^\infty G $ is the one we defined before:
$$\lambda \circ (St\to A_\infty): \,\, St \longrightarrow Aut(\bigoplus\limits_{i=2}^\infty G ).$$
\end{remark}

Now we introduce a group by generators and relations. For any integer $d$, let $M(\Z/d)$ be the group with the following presentations.
\begin{center}
\begin{tabular}{rl}
Generators:& $\alpha;\, X_2, X_3, X_4\cdots$ \\
Relations:& $[\alpha, X_i]=1$ where $2\leq i$ \\
                 & $[X_i, X_j]=\alpha$ where $2\leq i \neq j$\\
                 & $X_i^d=1$ where $2\leq i$.
\end{tabular}
\end{center}

\begin{remark} (1) If $d=0$, then $\Z/d=\Z$ and the last relation $X_i^0=1$ is by convention automatically true.

(2) It follows from the second relation that $\alpha^2=1$. If $d$ is odd, one can show  that $\alpha=1.$ So $M(\Z/d)=N(\Z/d)=\bigoplus\limits_{i=2}^\infty \Z/d$ if $d$ is odd.

(3) Every element in $M(\Z/d)$ can be written as $\alpha^rX_{i_1}^{e_{i_1}}X_{i_2}^{e_{i_2}}\cdots X_{i_n}^{e_{i_n}}$ where $n\geq 0$, which we call the standard form. The exponents $e_1, e_2,\cdots e_n$ are unique. It follows that $\bigoplus\limits_{i=2}^\infty \Z/d$ is a quotient of $M(\Z/d)$ with kernel the subgroup generated by $\alpha$. We can not discuss the uniqueness of $r$ for the moment because we do not know the order of $\alpha$ yet.
\end{remark}

\begin{lemma}\label{lemma:N(G)MapsOntoN(G)}There is an onto morphism $f: N(\Z/d)\to M(\Z/d)$.
\end{lemma}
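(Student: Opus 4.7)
The plan is to realize $M(\Z/d)$ as the ``$N$-part'' of a central extension of $E((\Z/d)_*)$ and to obtain $f$ by restriction, using the universal property of $St((\Z/d)_*)$. Concretely, I would construct a semidirect product $M(\Z/d) \rtimes St$ fitting into a central extension
$$1\longrightarrow \langle\alpha\rangle \times \Z/2 \longrightarrow M(\Z/d)\rtimes St \longrightarrow E((\Z/d)_*)\longrightarrow 1,$$
then invoke universality to lift the identity on $E((\Z/d)_*)$ to a homomorphism $\Phi\colon St((\Z/d)_*)\to M(\Z/d)\rtimes St$ compatible with the projections to $St$, and finally define $f$ as the restriction of $\Phi$ to $N(\Z/d)=\ker\tau$.

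The first step is to lift the conjugation action $\lambda\colon A_\infty\to\mathrm{Aut}(\bigoplus_{i\ge 2}\Z/d)$ to an action $\tilde\lambda$ of $A_\infty$ on $M(\Z/d)$ (whence, by composition with $St\to A_\infty$, an action of $St$). For a permutation $\sigma$ fixing the index $1$, the rule $X_i\mapsto X_{\sigma(i)}$ manifestly preserves the defining relations of $M(\Z/d)$. For a general $\sigma\in A_\infty$, I would imitate the explicit formula for $\lambda$, replacing the product $(\prod_{j\ge 2}g_j)^{-1}$ occurring there by $(\prod_{j\in\mathrm{supp}(\sigma)\setminus\{1\}}X_j)^{-1}$, and check relation-by-relation that the resulting assignment respects the presenting relations. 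The kernel $\langle\alpha\rangle\times\Z/2$ of $M(\Z/d)\rtimes St\to E((\Z/d)_*)$ is then central, because $\alpha$ is central in $M(\Z/d)$ and is $\tilde\lambda$-invariant (its image under $\tilde\lambda(\sigma)$ is again a commutator of two distinct $X_k$'s, hence equals $\alpha$), while $\Z/2=\ker(St\to A_\infty)$ is already known to be central in $St$.

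The universal property of $St((\Z/d)_*)$ then produces $\Phi$, and by the uniqueness of the universal lift used to define $\tau$ preceding Lemma~\ref{lemma:StSplitsSt(G)}, the second projection of $\Phi$ must coincide with $\tau$. Thus $\Phi$ restricts to a homomorphism $f\colon\ker\tau=N(\Z/d)\to M(\Z/d)$. For surjectivity, pick lifts $\tilde X_i\in N(\Z/d)$ of the generators of the $i$-th copy of $\Z/d$; then $f(\tilde X_i)$ differs from $X_i$ only by an element of $\langle\alpha\rangle$, and $f([\tilde X_i,\tilde X_j])=[X_i,X_j]=\alpha$ since $\alpha$ is central in $M(\Z/d)$. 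Hence both the $X_i$'s and $\alpha$ lie in the image of $f$, so $f$ is onto. The main technical obstacle is verifying, in the second step, that the shifted formulas defining $\tilde\lambda$ really do respect all of the presenting relations of $M(\Z/d)$ — once that is in hand, everything else is formal manipulation of universal properties.
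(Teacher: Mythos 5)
Your route is the same as the paper's: lift $\lambda$ to an $A_\infty$-action on $M(\Z/d)$, form $M(\Z/d)\rtimes St$ and note that its projection to $E(\Z/d_*)$ is a central extension with kernel $\langle\alpha\rangle\times\Z/2$, obtain $f$ from the universal property of $St(\Z/d_*)$, identify the $St$-component of $f$ with $\tau$ by uniqueness of lifts (the paper implements exactly this by composing with $\pi'\rtimes\id\colon M(\Z/d)\rtimes St\to\bigl(\bigoplus_{i\ge 2}\Z/d\bigr)\rtimes St$ and comparing with the lift used to define $\tau$), so that $f$ restricts to $N(\Z/d)\to M(\Z/d)$, and then deduce surjectivity from $[X_i\alpha^s,X_j\alpha^t]=\alpha$ together with the surjectivity of $\pi\colon N(\Z/d)\to\bigoplus_{i\ge 2}\Z/d$.

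The only substantive flaw is in the step you yourself flag as the technical obstacle: the formula you propose for $\tilde\lambda(\sigma)$, with correction term $(\prod_{j\in\mathrm{supp}(\sigma)\setminus\{1\}}X_j)^{-1}$, is not right. In the formula for $\lambda$ the product $(\prod_{j\ge 2}g_j)^{-1}$ is the suppressed first coordinate of the element being acted on, so for the generator $X_i$ its analogue is a single inverse placed in slot $\sigma(1)$, not a product over the support of $\sigma$; for instance with $\sigma=(123)$ one needs $\tilde\lambda(\sigma)(X_2)$ to lie over $y_2^{-1}y_3$, which a support-indexed product does not produce, so as written your assignment is not even a lift of $\lambda$. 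The correct lift is the three-case formula $\sigma(X_i)=X_{\sigma(i)}$ if $\sigma(1)=1$, $\sigma(X_i)=X_{\sigma(1)}^{-1}$ if $\sigma(i)=1$, and $\sigma(X_i)=X_{\sigma(1)}^{-1}X_{\sigma(i)}$ otherwise, together with $\sigma(\alpha)=\alpha$; with that formula the relation check is routine and the rest of your argument goes through as in the paper.
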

\begin{proof}
We first define a semidirect product $M(\Z/d)\rtimes St$.  Let $A_\infty$ act on positive integers naturally. For any $\sigma\in A_\infty$, one checks that $\sigma(\alpha)=\alpha$ and
$$\sigma(X_i)=\begin{cases}
X_{\sigma(i)} & \mbox{ if } \sigma(1)=1\\
X_{\sigma(1)}^{-1} & \mbox{ if } \sigma(i)=1\\
X_{\sigma(1)}^{-1}X_{\sigma(i)} & \mbox{ otherwise}
\end{cases}$$
defines an automorphism of $ M(\Z/d)$. Moreover, it defines an $A_\infty$-action on $M(\Z/d)$. Let $St$ act on $M(\Z/d)$ through the natural map $St\to A_\infty$, then we have a semidirect product  $M(\Z/d)\rtimes St$. Since the $St$-action on $M(\Z/d)$ is compatible with the $A_\infty$-action on $\bigoplus\limits_{i=2}^\infty \Z/d$, the natural maps $$M(\Z/d)\stackrel{\pi'}\longrightarrow \bigoplus\limits_{i=2}^\infty \Z/d \mbox{ and }
St \stackrel{\pi}\longrightarrow A_\infty$$ define a morphism $\pi': M(\Z/d)\rtimes St\to E(\Z/d_*)$ which is a central extension. So there is an  induced map $$f:St(\Z/d_*)=N(\Z/d)\rtimes St \longrightarrow M(\Z/d)\rtimes St.$$
We now have the following commutative cylinder, without the two vertical dotted maps.
 $$\xymatrix{
 N(\Z/d)\ar[rr]\ar[dr]_-\pi \ar@{.>}[dd]^f & & St(\Z/d_*) \ar[rr]^\tau \ar[dr]_-\pi \ar'[d][dd]^f&      & St \ar[dr]_\pi \ar@{.>}'[d][dd]^g \ar@/^1pc/[ll]^-\beta\\
               &\bigoplus\limits_{i=2}^\infty \Z/d \ar[rr] &                 & E(\Z/d)\ar[rr] && A_\infty \\
M(\Z/d)\ar[ur]^{\pi'}\ar[rr] & & M(\Z/d) \rtimes St \ar[ur]^{\pi'}\ar[rr]^-{\tau'} &&St \ar[ur]^\pi
 }$$

The right vertical map $g$ is defined as the composition map
 $$St\stackrel{\beta}{\longrightarrow}St(\Z/d_*)\stackrel{f}\longrightarrow M(\Z/d)\rtimes St\stackrel{\tau'}\longrightarrow St.$$
One checks that the right triangle commutes, i.e., $\pi\circ g=\pi$. So $g$ is the identity map.

Next we show that the right vertical square commutes, i.e., $\tau=g\circ \tau =\tau'\circ f.$ We recall the semidirect product $\bigoplus\limits_{i=2}^\infty \Z/d \rtimes St$ and the projection $\bigoplus\limits_{i=2}^\infty \Z/d \rtimes St \to E(\Z/d_*)$ which we now denote by $\pi''$. The various maps defined before fit into the following commutative diagram, where the dotted arrow $f$ is not included.
$$\xymatrix{ St(\Z/d_*)\ar[d]^\tau \ar[dr]_-{f''} \ar[drr]^\pi \ar@{.>}@/_2pc/[dd]_f & & \\
 St & \bigoplus\limits_{i=2}^\infty \Z/d \rtimes St \ar[l]_{\tau''} \ar[r]^{\pi''} & E(\Z/d_*)\\
 M(\Z/d)\rtimes St \ar[u]_{\tau'} \ar[ur]^-{f'} \ar[urr]_{\pi'} & &
}$$
In the diagram, $f'=\pi'\rtimes Id$ is well defined. $f''$ is the unique map induced by $\pi''$. Since $\pi''\circ f'\circ f= \pi'\circ f=\pi$, we see that $f'\circ f= f''$ by the universal property of universal central extension. This implies $\tau'\circ f= \tau''\circ f'\circ f=\tau'' \circ f''=\tau$.

Since $\tau'\circ f= g\circ \tau$, the commutative cylinder implies that $f$ restricts to a map $N(\Z/d)\to M(\Z/d)$, for which we denoted also by $f$. Obviously, the left triangle in the cylinder commutes because $\pi$ factors through $f$:
$$\xymatrix{N(\Z/d) \ar[r]_f \ar@/^1pc/[rr]^\pi &M(\Z/d) \ar[r]& \bigoplus\limits_{i=2}^\infty \Z/d.  }$$

Since $\pi$ is onto, for any $i\geq 2$ we have $X_i$ or $\alpha X_i$ belongs to $Img(f)$. Since $[X_i\alpha^s, X_j\alpha^t]=\alpha$ for any integers $s$ and $t$, we see $\alpha\in Img(f)$. In turn, this implies both $X_i\alpha$ and $X_i$ are in $Img(f)$. So $f$ is onto.
\end{proof}

\begin{lemma} \label{lemma:M(G)equalsN(G)ForGBeingZ} $f: N(\Z)\to M(\Z)$ is an isomorphism.
\end{lemma}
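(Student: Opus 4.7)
The plan is to deduce injectivity of the surjection $f\colon N(\Z)\to M(\Z)$ of Lemma \ref{lemma:N(G)MapsOntoN(G)} by comparing $f$ with the canonical projection onto $\bigoplus_{i\geq 2}\Z$. First I would set up the commutative ladder of central extensions
\[
\xymatrix{
1 \ar[r] & \ker(\pi) \ar[r] \ar[d]^{f'} & N(\Z) \ar[r]^-{\pi} \ar[d]^f & \bigoplus_{i\geq 2} \Z \ar[r] \ar[d]^{\id} & 1 \\
1 \ar[r] & \langle\alpha\rangle \ar[r] & M(\Z) \ar[r]^{\pi'} & \bigoplus_{i\geq 2} \Z \ar[r] & 1
}
\]
The top row is exact by Lemma \ref{lemma:KernelAndCokernelOfPi}; applied to $G=\Z$, where $H_3(\Z,\Z)=0$ and $H_1(\Z,\Z/2)=\Z/2$, the differential $d$ is forced to vanish so $\ker(\pi)=\mathrm{Coker}(d)\cong\Z/2$. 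The bottom row is exact from the standard form $\alpha^r X_{i_1}^{e_1}\cdots X_{i_n}^{e_n}$ given in the remark preceding the lemma.

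Since the right-hand vertical map is the identity and $f$ is onto, $f'$ is onto as well. Because $\alpha^2=1$ in $M(\Z)$ (noted in the remark), the group $\langle\alpha\rangle$ has order $1$ or $2$. A five-lemma argument then reduces the injectivity of $f$ to showing that $\langle\alpha\rangle$ has order exactly $2$, i.e.\ that $\alpha\neq 1$ in $M(\Z)$.

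To show $\alpha\neq 1$, I would construct an auxiliary central extension in which the defining relations of $M(\Z)$ are realized faithfully. Put $V:=\bigoplus_{i\geq 2}\Z$ with standard basis $\{e_i\}_{i\geq 2}$, and let $H$ be the set $V\times\Z/2$ with multiplication
\[
(v,\epsilon)\cdot(w,\delta)\;=\;\bigl(v+w,\;\epsilon+\delta+c(v,w)\bigr),\qquad c(v,w):=\sum_{i<j}v_iw_j\pmod 2.
\]
A short direct calculation verifies the $2$-cocycle identity $c(u,v)+c(u+v,w)=c(v,w)+c(u,v+w)$ and the normalization $c(v,0)=c(0,v)=0$, so $H$ is a group with $\{0\}\times\Z/2$ central. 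Using the standard identity $[(v,0),(w,0)]=(0,c(v,w)-c(w,v))$ in a cocycle central extension, the assignment $X_i\mapsto(e_i,0)$ and $\alpha\mapsto(0,1)$ respects every relation of $M(\Z)$: for $i\neq j$ one gets $[X_i,X_j]\mapsto(0,1)$, while $[\alpha,X_i]\mapsto 1$ and $\alpha^2\mapsto 1$ are clear. Thus there is a homomorphism $M(\Z)\to H$ sending $\alpha$ to $(0,1)\neq 1$, forcing $\alpha\neq 1$ in $M(\Z)$ and completing the argument.

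The main obstacle is the construction of the cocycle $c$: the five-lemma step is immediate, but one must produce a concrete group witnessing that the symmetric commutator prescription $[X_i,X_j]=\alpha$ together with $\alpha^2=1$ is consistent and does not collapse $\alpha$ to the identity. Picking the elementary "upper triangular" cocycle $c(v,w)=\sum_{i<j}v_iw_j$ is the cleanest choice, since its symmetrization is the constant pairing $1$ on distinct basis vectors, exactly matching the defining relations of $M(\Z)$.
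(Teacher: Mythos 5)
Your argument is correct, and it follows the paper's skeleton up to the decisive point: the same ladder of central extensions over $\bigoplus_{i\geq 2}\Z$ (with $\ker(\pi)\cong\mathrm{Coker}(d)\cong\Z/2$ supplied, as in the paper, by the Atiyah--Hirzebruch input via Lemma \ref{lemma:KernelAndCokernelOfPi}, here using $H_3(\Z,\Z)=0$), surjectivity of $f$ from Lemma \ref{lemma:N(G)MapsOntoN(G)}, and the short five lemma reducing everything to $\alpha\neq 1$ in $M(\Z)$. Where you genuinely diverge is in proving $\alpha\neq 1$: the paper goes back into $St(\Z_*)$, chooses lifts $Y_i$ of the basis of $\bigoplus_{i\geq 2}\Z$, uses conjugation by lifts of even permutations to show all commutators $[Y_i,Y_j]$ equal one central element $b$ generating $\ker(\pi)$, hence $b=a\neq 1$, and thereby builds a homomorphism $g\colon M(\Z)\to N(\Z)$ hitting $a$; you instead build a purely algebraic witness, the mod-$2$ Heisenberg-type extension $H=\bigl(\bigoplus_{i\geq 2}\Z\bigr)\times\Z/2$ with the bilinear cocycle $c(v,w)=\sum_{i<j}v_iw_j$, whose commutator pairing is identically $1$ on distinct basis vectors, and map $M(\Z)\to H$ by von Dyck. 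Your cocycle verification and the commutator formula $[(v,0),(w,0)]=(0,c(v,w)-c(w,v))$ are correct, so the argument is complete. What each buys: the paper's construction of $g$ also identifies $b=a$ and effectively exhibits the isomorphism explicitly, information echoed in the remark and in Theorem \ref{thm:UniversalCentralExtensionOfE(G)}; your construction is more self-contained for the nontriviality step (no further use of $St(\Z_*)$), and, replacing $\bigoplus\Z$ by $\bigoplus\Z/d$ with $d$ even (the cocycle is still well defined mod $2$ and $X_i^d\mapsto(de_i,0)=1$), it proves $\alpha\neq 1$ in $M(\Z/d)$ directly, giving an alternative to the word-length argument of Lemma \ref{lemma:AlphaIsNotZero}.
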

\begin{proof} One can compute $\pi^S_2(B\Z_+)=\pi^S_2(S^1_+)\cong \Z/2\oplus \Z/2$ easily from the Atiyah-Hirzebruch spectral sequence. So $Coker(d)=\langle a \rangle\cong \Z/2$ where $a$ denotes the unique generator. There is a commutative diagram
$$\xymatrix{1\ar[r] & \langle a \rangle \ar[r]\ar[d]^f & N(\Z)  \ar[r]^-\pi \ar[d]^f& \bigoplus\limits_{i=2}^\infty \Z \ar[r] \ar@{=}[d] &1\\
            1\ar[r] & \langle\alpha\rangle\ar[r] & M(\Z)  \ar[r]& \bigoplus\limits_{i=2}^\infty \Z \ar[r] &1  }$$
where the top row is given by Lemma \ref{lemma:KernelAndCokernelOfPi} and $f$ is the onto map defined in Lemma \ref{lemma:N(G)MapsOntoN(G)}. It is easy to see that $f(a)=\alpha$ because $f$ is onto and $\alpha$ has order at most 2.  To show that $f$ is an isomorphism, we only need to show that $f(a)=\alpha \neq 1$ in $M(\Z)$.

Let $y_2, y_3\cdots $ be the standard basis of $\bigoplus\limits_{i=2}^\infty \Z$ and let $Y_i$ be a fixed preimage in $\pi^{-1}(y_i)$. For any integers $i,j,s,t\geq 2$ such that $i\neq j$ and $s\neq t$, it is routine to show that $[Y_i, Y_j]=[Y_s, Y_t]$. That is, chose some $\sigma\in A_\infty$ such that $\sigma(1)=1, \sigma(i)=s$ and $\sigma(j)=t$. Let $\Sigma \in St$ be a preimage of $\sigma$, then one checks that $\Sigma Y_i\Sigma^{-1}$ and $\Sigma Y_j\Sigma^{-1}$ differs from $Y_s$ and $Y_t$ by central elements. So $[Y_i, Y_j] =[Y_s, Y_t]$. Let $b$ denote this common value. Obviously, $b\in \ker(\pi)$ is a central element. So  every element in the semidirect product $St(\Z_*)$ can be written as $(b^rY_1^{e_1}Y_1^{e_1}Y_2^{e_2}\cdots, u)$ where $u\in St$ and the powers $e_1, e_2,\cdots$ are unique. So $b$ generates the kernel of the map $N(\Z) \to \bigoplus\limits_{i=2}^\infty \Z $, which implies that $b=a$. It follows that there is a well defined group homomorphism
$$g: M(\Z) \longrightarrow N(\Z)$$ which sends $\alpha $ to $a$ and $X_i$ to $Y_i$. Since $a\neq 1$, so $\alpha \neq 1$ and hence completes the proof. \end{proof}

\begin{remark} The above argument fails if we replace the group $\Z$ by $\Z/d$ where $d$ is a nonzero even number. This is because, in this case, the element $(b^rY_1^{e_1}Y_1^{e_1}Y_2^{e_2}\cdots, 1)$ is in $\ker(\pi)$ if and only if each power $e_i$ is a multiple of $d$. It could happen that $Y_i^d \neq 1$ while $b=1$ which implies we can not define the map $g$ as in the case $d=0$.  However, we can see later from Theorem \ref{thm:UniversalCentralExtensionOfE(G)} that $Y_i^d=1$ and $b\neq 1$.
\end{remark}

\begin{lemma}\label{lemma:AlphaIsNotZero} If $d$ is even, then $\alpha \neq 1$ in $M(\Z/d)$.
\end{lemma}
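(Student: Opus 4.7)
The plan is to prove $\alpha \neq 1$ by exhibiting an explicit group $H$ and a group homomorphism $\varphi\colon M(\Z/d) \to H$ sending $\alpha$ to a nontrivial element. Since the remark preceding the lemma already gives $\alpha^2 = 1$, it is enough to map $\alpha$ to a non-identity element of order (dividing) $2$ in some target. The natural target is a central extension of $\bigoplus_{i\geq 2}\Z/d$ by $\Z/2$ realizing the desired commutator relations.

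Concretely, I set $G = \bigoplus_{i\geq 2} \Z/d$ and define
$$\omega\colon G\times G \longrightarrow \Z/2, \qquad \omega(x,y) = \sum_{i<j} x_i y_j \pmod 2.$$
The key point—and the only place the hypothesis that $d$ is even is used—is that $\omega$ is well-defined: replacing a coordinate $x_i$ by $x_i+d$ changes $\omega(x,y)$ by $d\cdot(\text{something})\equiv 0\pmod 2$. Being a bilinear map, $\omega$ is automatically a normalized $2$-cocycle, so it defines a central extension $1\to \Z/2 \to H \to G \to 1$ with underlying set $G\times\Z/2$ and product $(x,a)(y,b) = (x+y,\,a+b+\omega(x,y))$.

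Next I would verify that the assignments $X_i \mapsto (e_i,0)$ and $\alpha \mapsto (0,1)$ respect the defining relations of $M(\Z/d)$. Centrality of $(0,1)$ is immediate from the construction. For $i\neq j$ one computes
$$[(e_i,0),(e_j,0)] = \bigl(0,\,\omega(e_i,e_j) - \omega(e_j,e_i)\bigr) = (0,1),$$
by checking the two cases $i<j$ and $i>j$. Finally, since $\omega(e_i,e_i) = 0$ (the defining sum is empty), $(e_i,0)^d = (d e_i, 0) = (0,0)$, giving $X_i^d = 1$. These relations yield a well-defined homomorphism $\varphi\colon M(\Z/d) \to H$ under which $\alpha$ maps to the nontrivial central element $(0,1)$, hence $\alpha \neq 1$ in $M(\Z/d)$.

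There is no real obstacle beyond confirming well-definedness of $\omega$, which is precisely why the parity hypothesis on $d$ is needed; if $d$ were odd then $\omega$ would not descend to $\Z/2$ and the construction would collapse, consistent with Proposition~\ref{prop:UCEforZmodOddNumber} where $\alpha = 1$. Combined with Lemma~\ref{lemma:N(G)MapsOntoN(G)} this lemma will let one identify $N(\Z/d)$ with $M(\Z/d)$ (when $d$ is even) after further work, paralleling the $d=0$ case handled in Lemma~\ref{lemma:M(G)equalsN(G)ForGBeingZ}.
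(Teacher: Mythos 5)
Your proof is correct, but it takes a genuinely different route from the paper. The paper deduces the lemma from the already-established fact that $\alpha\neq 1$ in $M(\Z)$ (Lemma \ref{lemma:M(G)equalsN(G)ForGBeingZ}, which ultimately rests on the stable homotopy computation $\pi_2^S(S^1_+)\cong\Z/2\oplus\Z/2$): it considers the reduction map $\rho\colon M(\Z)\to M(\Z/d)$, whose kernel is normally generated by the $X_i^d$, and shows by a normal-form analysis that every element of $\ker\rho$ has even $\alpha$-exponent, so $\alpha\notin\ker\rho$. You instead produce an explicit witness: the central extension $H$ of $\bigoplus_{i\geq 2}\Z/d$ by $\Z/2$ determined by the biadditive cocycle $\omega(x,y)=\sum_{i<j}x_iy_j \bmod 2$, together with the verification that $X_i\mapsto(e_i,0)$, $\alpha\mapsto(0,1)$ respects all three families of relations (centrality, $[(e_i,0),(e_j,0)]=(0,\omega(e_i,e_j)-\omega(e_j,e_i))=(0,1)$ in both orders since $-1\equiv 1\bmod 2$, and $(e_i,0)^d=(de_i,0)=(0,0)$ because $\omega(e_i,e_i)=0$). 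The parity of $d$ enters exactly where it should, in the well-definedness of $\omega$ modulo $2$, consistent with the collapse in the odd case of Proposition \ref{prop:UCEforZmodOddNumber}. What your argument buys is self-containedness: it is purely algebraic, avoids both the topological input behind Lemma \ref{lemma:M(G)equalsN(G)ForGBeingZ} and the rewriting/kernel analysis of the paper, and (taking $d=0$, where the relation $X_i^0=1$ is vacuous) it even reproves $\alpha\neq 1$ in $M(\Z)$ directly; what the paper's route buys is economy, reusing the $d=0$ case it already needed and requiring no new construction.
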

\begin{proof} By Lemma \ref{lemma:M(G)equalsN(G)ForGBeingZ}, we see that $\alpha \neq 1$ in $M(\Z)$. If $d$ is  a nonzero even number, we consider the map $\rho: M(\Z)\to M(\Z/d)$ which sends $\alpha $ to $\alpha$ and $X_i$ to $X_i$. The kernel of $\rho$ is the normal subgroup generated by $\{X_i^d|2\leq i\}$. So an element $x$ is in the kernel of $\rho$ if and only if $x$ can be written as
$$x= \prod\limits_{l=1}^Na_l X_{i_l}^{^{_+}_{^-}d} a_l^{-1}$$
where $a_l\in M(\Z)$. Rewrite $x$ in the standard form $x=\alpha^rX_{i_1}^{e_{i_1}}X_{i_2}^{e_{i_2}}\cdots X_{i_n}^{e_{i_n}}$, one sees that $r$ is a multiple of $d$. This shows that $r$ is always even. In particular, $\alpha$ is not in the kernel of $\rho$. So $\alpha$ is not the identity in $M(\Z/d)$ if $d$ is even.
\end{proof}

\begin{theorem}\label{thm:UniversalCentralExtensionOfE(G)} The universal central extension of $E(\Z/d_*)$, where $d$ is even, is given by
$$1\longrightarrow \Z/2\oplus \Z/2 \longrightarrow M(\Z/d)\rtimes St {\longrightarrow} E(\Z/d_*) \longrightarrow 1.$$ In particular, $K_2(\Z/d_*)=\pi^S_2(B\Z/d_+)\cong \Z/2\oplus \Z/2$.
\end{theorem}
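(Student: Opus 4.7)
The plan is to upgrade the surjection $f\colon N(\Z/d)\to M(\Z/d)$ of Lemma \ref{lemma:N(G)MapsOntoN(G)} to an isomorphism when $d$ is even, and then transport the splitting $St(\Z/d_*)\cong N(\Z/d)\rtimes St$ of Lemma \ref{lemma:StSplitsSt(G)} across it. The first observation is that $\alpha$ has order exactly two in $M(\Z/d)$: the general identity $[X_j,X_i]=[X_i,X_j]^{-1}$ applied to the defining relation $[X_i,X_j]=\alpha$ forces $\alpha^2=1$, and Lemma \ref{lemma:AlphaIsNotZero} guarantees $\alpha\neq 1$, so $\langle\alpha\rangle\cong\Z/2$.

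Next I would compare the two extensions
$$\xymatrix{
1 \ar[r] & Coker(d) \ar[r] \ar[d]^{f} & N(\Z/d) \ar[r]^-\pi \ar[d]^f & \bigoplus\limits_{i\geq 2}\Z/d \ar[r] \ar@{=}[d] & 1 \\
1 \ar[r] & \langle \alpha \rangle \ar[r] & M(\Z/d) \ar[r] & \bigoplus\limits_{i\geq 2}\Z/d \ar[r] & 1
}$$
whose top row comes from Lemma \ref{lemma:KernelAndCokernelOfPi} and whose bottom row is the defining extension of $M(\Z/d)$. A one-line diagram chase using the surjectivity of $f$ and the identity on the right shows the restricted map $Coker(d)\to\langle\alpha\rangle$ is onto. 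Since Lemma \ref{lemma:FromTheSpectralSequence} forces $Coker(d)$ to be a quotient of $\Z/2$ and the target is $\Z/2$, both sides must be $\Z/2$ and the left vertical arrow is an isomorphism. The five lemma then upgrades $f$ to an isomorphism $N(\Z/d)\cong M(\Z/d)$.

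Combining with the semidirect-product decomposition from Lemma \ref{lemma:StSplitsSt(G)}, I obtain $St(\Z/d_*)\cong M(\Z/d)\rtimes St$ as central extensions of $E(\Z/d_*)$, and the kernel of the natural projection $M(\Z/d)\rtimes St\to\bigoplus_{i\geq 2}\Z/d\rtimes A_\infty=E(\Z/d_*)$ is visibly $\langle\alpha\rangle\times\ker(St\to A_\infty)\cong\Z/2\oplus\Z/2$. This yields both the displayed universal central extension and the computation $K_2(\Z/d_*)=\pi^S_2(B\Z/d_+)\cong\Z/2\oplus\Z/2$. The genuinely hard point is hidden in Lemma \ref{lemma:AlphaIsNotZero}: were $\alpha$ to collapse in $M(\Z/d)$, the Atiyah--Hirzebruch upper bound would give no information about $Coker(d)$ and one could not rule out the odd-$d$ behavior of Proposition \ref{prop:UCEforZmodOddNumber}. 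With $\alpha\neq 1$ in hand, the remainder is purely formal bookkeeping.
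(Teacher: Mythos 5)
Your proposal is correct and follows essentially the same route as the paper: reduce to showing $f\colon N(\Z/d)\to M(\Z/d)$ is an isomorphism, compare the two extensions over $\bigoplus_{i\geq 2}\Z/d$, use Lemma \ref{lemma:AlphaIsNotZero} to get $\langle\alpha\rangle\cong\Z/2$, and use that $Coker(d)$ is a quotient of $\Z/2$ (Lemma \ref{lemma:FromTheSpectralSequence}) to force the left vertical map, and hence $f$, to be an isomorphism. The only cosmetic differences are that you spell out the final kernel computation for $M(\Z/d)\rtimes St\to E(\Z/d_*)$ and treat all even $d$ uniformly, whereas the paper splits off $d=0$ by citing Lemma \ref{lemma:M(G)equalsN(G)ForGBeingZ} directly.
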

\begin{proof} It is enough to show that $f: N(\Z/d) \to M(\Z/d)$ is an isomorphism. The case $d=0$ is considered in Lemma \ref{lemma:M(G)equalsN(G)ForGBeingZ}. In general, we consider the following commutative diagram
$$\xymatrix{1\ar[r] & Coker(d) \ar[r]\ar[d]^f & N(\Z)  \ar[r]\ar[d]^f& \bigoplus\limits_{i=2}^\infty \Z \ar[r] \ar@{=}[d] &1\\
            1\ar[r] & \langle\alpha\rangle\ar[r] & M(\Z)  \ar[r]& \bigoplus\limits_{i=2}^\infty \Z \ar[r] &1.  }$$
Since $\alpha \neq 1$ in $M(\Z/d)$ by Lemma \ref{lemma:AlphaIsNotZero}, we have $\langle\alpha\rangle\cong\Z/2$. Since $Coker(d)$ is a quotient of $\Z/2$, the onto morphism $Coker(d) \to  \langle\alpha\rangle$ has to be an isomorphism. This completes the proof.
\end{proof}

The following result is not surprising to experts. For lacking of a direct reference, we give a proof.
\begin{corollary}\label{corr:K2OfFGAbelainGroups} Let $G$ be a finitely generated abelian group.  In the Atiyah-Hirzebruck spectral sequence of the stable homotopy groups for $BG_*$, the differential map
$$d: E^2_{3,0}=H_3(G, \Z)\longrightarrow E^2_{1,1}=H_1(G, \Z/2)$$  is the zero map.  Consequently, $K_2(G_*)\cong \pi_2^s(BG_+)\cong \Z/2 \oplus G/2 \oplus H_2(G, \Z).$
\end{corollary}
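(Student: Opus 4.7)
The plan is to bypass direct analysis of the spectral sequence differential by computing $\pi_2^s(BG_+)$ via the stable splitting of classifying spaces of products, then extracting the vanishing of $d$ by comparing orders in the exact sequence of Lemma \ref{lemma:FromTheSpectralSequence}.

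By the structure theorem, write $G = G_1\oplus\cdots\oplus G_n$ with each $G_i$ cyclic, so $BG\simeq BG_1\times\cdots\times BG_n$. The stable splitting $\Sigma^\infty(X\times Y)_+ \simeq \Sigma^\infty X_+\wedge \Sigma^\infty Y_+$, together with $\Sigma^\infty X_+\simeq \Sigma^\infty X\vee S^0$, gives
\[
\Sigma^\infty BG_+\;\simeq\;\bigvee_{I\subseteq\{1,\ldots,n\}}\bigwedge_{i\in I}\Sigma^\infty BG_i,
\]
where $I=\emptyset$ contributes the sphere spectrum. Apply $\pi_2^s$ termwise. The $I=\emptyset$ term gives $\Z/2$. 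For $|I|=1$, Proposition \ref{prop:UCEforZmodOddNumber} and Theorem \ref{thm:UniversalCentralExtensionOfE(G)}, combined with Lemma \ref{lemma:FromTheSpectralSequence}, yield $\pi_2^s(BG_i)=G_i/2$, summing to $G/2$. For $|I|\geq 3$, the $|I|$-fold smash of connected CW complexes is at least $2$-connected, so it contributes nothing to $\pi_2^s$.

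The remaining case is the two-fold smash $BG_i\wedge BG_j$. K\"unneth yields $\tilde H_n(BG_i\wedge BG_j;\Z)=0$ for $n\leq 1$ and $\tilde H_2(BG_i\wedge BG_j;\Z)=G_i\otimes G_j$, and universal coefficients gives the same vanishing in degrees $0,1$ with $\Z/2$ coefficients. Hence on the line $p+q=2$ of the AHSS only $E^2_{2,0}=G_i\otimes G_j$ is nonzero; all differentials into or out of this slot land in trivial groups, and there is no extension issue. Therefore $\pi_2^s(BG_i\wedge BG_j)\cong G_i\otimes G_j$. Summing over pairs gives $\bigoplus_{i<j}G_i\otimes G_j\cong \Lambda^2 G$, which equals $H_2(G;\Z)$ by iterated K\"unneth (using $H_2=0$ for cyclic groups and $\Lambda^2$ of cyclic is zero).

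Adding up, $K_2(G_*)=\pi_2^s(BG_+)\cong \Z/2\oplus G/2\oplus H_2(G;\Z)$, the stated formula. To extract $d=0$, compare with Lemma \ref{lemma:FromTheSpectralSequence}: the exact sequence
\[
0\longrightarrow\mathrm{Coker}(d)\longrightarrow \pi_2^s(BG)\longrightarrow H_2(G;\Z)\longrightarrow 0,
\]
together with $\pi_2^s(BG)\cong G/2\oplus H_2(G;\Z)$, forces $|\mathrm{Coker}(d)|=|G/2|$ (using that in a short exact sequence with finite kernel $A$, one has $|B_{\mathrm{tors}}|=|A|\cdot|C_{\mathrm{tors}}|$). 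Since $\mathrm{Coker}(d)$ is a priori a quotient of $H_1(G;\Z/2)=G/2$, equality of orders gives $\mathrm{Coker}(d)=G/2$, equivalently $d=0$. The main technical hurdle is the collapse-and-no-extension claim for $BG_i\wedge BG_j$ on the line $p+q=2$; once that is in hand the rest is bookkeeping with stable splittings.
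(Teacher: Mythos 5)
Your argument is correct, but it takes a genuinely different route from the paper. The paper stays inside the Atiyah--Hirzebruch spectral sequence: it uses the split maps $G_i\to G\to G_i$ and naturality of the exact sequence of Lemma \ref{lemma:FromTheSpectralSequence} to produce maps $\tau:\bigoplus_i G_i/2\to \mathrm{Coker}(d)$ and $p$ in the other direction with $p\circ\tau=\mathrm{id}$; injectivity of $\tau$ plus the fact that $\mathrm{Coker}(d)$ is a quotient of $H_1(G,\Z/2)\cong G/2$ forces $\tau$ to be an isomorphism, so $d=0$, and the same retraction splits the extension, giving the formula. You instead compute $\pi_2^s(BG_+)$ outright from the stable splitting $\Sigma^\infty BG_+\simeq\bigvee_I\bigwedge_{i\in I}\Sigma^\infty BG_i$, handling the summands by the cyclic case (Proposition \ref{prop:UCEforZmodOddNumber}, Theorem \ref{thm:UniversalCentralExtensionOfE(G)}), a K\"unneth-plus-collapse argument for the two-fold smashes, and connectivity for $|I|\geq 3$, and only afterwards extract $d=0$ by an order count in the exact sequence of Lemma \ref{lemma:FromTheSpectralSequence}; that counting step is sound, since finiteness of $\mathrm{Coker}(d)$ guarantees $0\to\mathrm{Coker}(d)\to\pi_2^s(BG)_{\mathrm{tors}}\to H_2(G,\Z)_{\mathrm{tors}}\to 0$ is exact and all torsion subgroups involved are finite. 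What each approach buys: the paper's retraction argument is shorter, needs no stable splitting or K\"unneth input, and produces the splitting of the extension directly; yours identifies the $H_2$ summand concretely as $\bigoplus_{i<j}G_i\otimes G_j\cong\Lambda^2G$, makes the final isomorphism manifest rather than obtained from a split exact sequence, and is closer in spirit to how one would attack products of nonabelian groups as in the paper's last corollary. Both arguments depend on the cyclic computations established earlier, so neither is more self-contained on that score.
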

\begin{proof} If $G$ is cyclic, $d=0$ by Proposition \ref{prop:UCEforZmodOddNumber} and Theorem \ref{thm:UniversalCentralExtensionOfE(G)}.  In general, let $G=G_1\oplus G_2 \oplus \cdots \oplus G_n$ be the decomposition of $G$ into cyclic groups. Since the Atiyah-Hirzebruch spectral sequence is natural with respect to continuous maps and there is a splitting morphism of groups $$G_i\stackrel{\tau_i}\longrightarrow G \stackrel{p_i}\longrightarrow G_i,$$ we have the following commutative diagram of short exact sequences from Lemma \ref{lemma:FromTheSpectralSequence}
$$\xymatrix{
0 \ar[r] & Coker(d)\ar[r]\ar@<.5ex>[d]^p                                     & \pi^S_2(BG)\ar[r]\ar@<.5ex>[d]^{p}  & H_2(G, \Z) \ar[r]& 0\\
0 \ar[r] & \bigoplus\limits_{i=1}^n G_i/2 \ar[r]^-\cong \ar@<.5ex>[u]^{\tau} &  \bigoplus\limits_{i=1}^n \pi^S_2(BG_i)\ar[r]\ar@<.5ex>[u]^{\tau}  &     0      \ar[r]& 0
}$$ where $\tau=\coprod{\tau_i}_* $ and $p= \prod {p_i}_*$. One checks that the composition  $p\circ \tau$ is the identity map. So $\tau$ is an injective map between finite sets. Since $\bigoplus\limits_{i=1}^n G_i/2 \cong G/2$ and $Coker(d)$ is a quotient of $G/2$, $\tau$ is an isomorphism. So $Coker(d)\cong G/2$ and the top exact sequence in the diagram splits.
\end{proof}

We end this section by computing $\pi^S_2(BG+)$ for some nonabelian groups $G$. The proof is the same as the proof of Corollary \ref{corr:K2OfFGAbelainGroups} and hence skipped.
\begin{corollary} Let $n\geq 2$. Let $\Sigma_n$ be the symmetric group on $n$ objects, $A_n$ the group of even permutations in $\Sigma_n$ and $\mathbb{F}_q$ the field of $q$ elements.
\begin{enumerate}\item[1.] $\pi_2^S(B{A_n}_+)=\Z/2\oplus H_2(A_n, \Z)$.
\item[2.] $\pi_2^S(B{\Sigma_n}_+)=\Z/2\oplus\Z/2\oplus H_2(\Sigma_n, \Z)$.
\item[3.] $\pi_2^S(BE_n(\mathbb{F}_q)_+)=\Z/2\oplus H_2(E_n(\mathbb{F}_q), \Z)$.
\item[4.] $\pi_2^S(BGL_n(\mathbb{F}_q)_+)=\Z/2\oplus\Z/2\oplus H_2(GL_n(\mathbb{F}_q), \Z)$ if $q$ is odd or $\Z/2\oplus H_2(GL_n(\mathbb{F}_q), \Z)$ if $q$ is even.
\end{enumerate}
\end{corollary}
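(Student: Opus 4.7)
The plan is to mirror the proof of Corollary \ref{corr:K2OfFGAbelainGroups}, applying the Atiyah-Hirzebruch spectral sequence together with Lemma \ref{lemma:FromTheSpectralSequence} to reduce, in each case, to computing the cokernel of
$$d : H_3(G, \Z) \longrightarrow H_1(G, \Z/2) = G^{ab} \otimes \Z/2$$
and producing a splitting of the resulting short exact sequence
$$0 \longrightarrow Coker(d) \longrightarrow \pi_2^S(BG) \longrightarrow H_2(G, \Z) \longrightarrow 0.$$

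For cases $1$ and $3$ the relevant groups are essentially perfect. The abelianization of $A_n$ has odd order ($A_n^{ab}$ is trivial for $n \geq 5$, equals $\Z/3$ for $n = 3,4$, and $A_2 = 1$), so $H_1(A_n, \Z/2) = 0$; the same holds for $E_n(\mathbb{F}_q) = SL_n(\mathbb{F}_q)$ in the range where it is perfect. Thus $Coker(d) = 0$, the sequence collapses to an isomorphism $\pi_2^S(BG) \cong H_2(G, \Z)$, and the claimed identity follows after accounting for the $\pi_2^S(S^0) = \Z/2$ summand of Lemma \ref{lemma:FromTheSpectralSequence}.

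For cases $2$ and $4$ the plan is to exhibit a cyclic retract $\tau : H \hookrightarrow G$ with left inverse $\epsilon : G \twoheadrightarrow H$ and transfer the known cyclic computation. For $\Sigma_n$, I would take $\epsilon$ to be the sign map and $\tau$ the inclusion of a transposition; for $GL_n(\mathbb{F}_q)$, I would take $\epsilon$ to be the determinant and $\tau$ the diagonal embedding of $\mathbb{F}_q^\times$. Naturality of the Atiyah-Hirzebruch spectral sequence then yields a commutative diagram
$$\xymatrix{
0 \ar[r] & Coker(d_G) \ar[r] \ar@<.5ex>[d]^{\epsilon_*} & \pi_2^S(BG) \ar[r] \ar@<.5ex>[d]^{\epsilon_*} & H_2(G, \Z) \ar[r] & 0 \\
0 \ar[r] & H/2 \ar[r]^-{\cong} \ar@<.5ex>[u]^{\tau_*} & \pi_2^S(BH) \ar[r] \ar@<.5ex>[u]^{\tau_*} & 0 \ar[r] & 0
}$$
whose bottom row uses $H_2(H, \Z) = 0$ and the identification $\pi_2^S(BH) = H/2$ provided by Proposition \ref{prop:UCEforZmodOddNumber} and Theorem \ref{thm:UniversalCentralExtensionOfE(G)}. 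Because $\tau_*$ is a section of $\epsilon_*$ and its image projects trivially to $H_2(G, \Z)$ (the projection factors through $H_2(H, \Z) = 0$), the map $\tau_*$ injects $H/2$ into $Coker(d_G)$; combined with the surjection $G^{ab} \otimes \Z/2 = H/2 \twoheadrightarrow Coker(d_G)$ this pins down $Coker(d_G) \cong H/2$. The retraction then decomposes $\pi_2^S(BG) = \tau_*(H/2) \oplus \ker(\epsilon_*)$, forcing $\ker(\epsilon_*)$ to map isomorphically to $H_2(G, \Z)$, which splits the top row and yields the stated formulas.

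The main obstacle is verifying that $\tau_*(\pi_2^S(BH))$ really lands inside the subgroup $Coker(d_G)$ of $\pi_2^S(BG)$; this rests on the vanishing of $H_2$ of a cyclic group together with naturality of the AH filtration. A secondary nuisance is the handful of exceptional small cases where $E_n(\mathbb{F}_q)$ fails to be perfect (notably $(n,q) = (2,2)$ and $(2,3)$), which must be checked separately; the groups involved are small enough that their stable homotopy in degree two is already tabulated in the literature.
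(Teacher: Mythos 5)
Your proposal is correct and is essentially the paper's own (skipped) proof: the paper simply declares the argument to be the same as that of Corollary \ref{corr:K2OfFGAbelainGroups}, i.e.\ the exact sequence of Lemma \ref{lemma:FromTheSpectralSequence} plus naturality of the Atiyah--Hirzebruch spectral sequence, with your sign and determinant retractions playing exactly the role of the cyclic splittings $G_i\to G\to G_i$ used there, and with $H_1(G,\Z/2)=0$ disposing of the (essentially) perfect cases. Your attention to the exceptional small cases (notably $(n,q)=(2,2)$, where $E_2(\mathbb{F}_2)=GL_2(\mathbb{F}_2)\cong\Sigma_3$ is not perfect) is more careful than the paper, which silently assumes the generic identifications $H_1(E_n(\mathbb{F}_q),\Z/2)=0$ and $GL_n(\mathbb{F}_q)^{\mathrm{ab}}\cong\mathbb{F}_q^\times$.
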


\end{document}